\renewenvironment{flushright}{%
    \begin{center}%
}{%
    \end{center}%
    \ignorespacesafterend%
}
\titlespacing*{\subsubsection}{0pt}{10pt}{0pt}
\let\Oldsubsection\subsection
\renewcommand{\subsection}{\FloatBarrier\Oldsubsection}
\pgfplotsset{compat=newest}
\newenvironment{bmatrixcolor}[1][red]
  {\colorlet{savethecolor}{.}\colorlet{bracecolor}{#1}%
    \color{bracecolor}\left[\color{savethecolor}\begin{matrix}}
  {\end{matrix}\color{bracecolor}\right]}
\pgfplotsset{%
layers/standard/.define layer set={%
    background,axis background,axis grid,axis ticks,axis lines,axis tick labels,pre main,main,axis descriptions,axis foreground%
}{grid style= {/pgfplots/on layer=axis grid},%
    tick style= {/pgfplots/on layer=axis ticks},%
    axis line style= {/pgfplots/on layer=axis lines},%
    label style= {/pgfplots/on layer=axis descriptions},%
    legend style= {/pgfplots/on layer=axis descriptions},%
    title style= {/pgfplots/on layer=axis descriptions},%
    colorbar style= {/pgfplots/on layer=axis descriptions},%
    ticklabel style= {/pgfplots/on layer=axis tick labels},%
    axis background@ style={/pgfplots/on layer=axis background},%
    3d box foreground style={/pgfplots/on layer=axis foreground},%
    },
}
\pgfplotsset{ /tikz/every picture/.append style={trim axis left, trim axis right}}
\newsavebox{\measure@tikzpicture}
  \def\tikz@width{#1}%
\newtheorem{lemma}{Lemma}
\newtheorem{corollary}{Corollary}
\newtheorem{example}{Example}
\newcommand{\vect}[1]{\boldsymbol{\mathbf{#1}}}
\newcommand\deflabel[1]{\def\@currentlabel{#1}}
\newcommand{\pushright}[1]{\ifmeasuring@#1\else\omit\hfill$\displaystyle#1$\fi\ignorespaces}
\DeclareMathOperator*{\argmin}{arg\!\min}
\DeclareMathOperator*{\minimize}{minimize}
\newcommand{\bigcomp}{%
  \DOTSB
  \mathop{\vphantom{\sum}\mathpalette\bigcomp@\relax}%
  \slimits@
}
\newcommand{\bigcomp@}[2]{%
  \begingroup\m@th
  \sbox\z@{$#1\sum$}%
  \setlength{\unitlength}{0.9\dimexpr\ht\z@+\dp\z@}%
  \vcenter{\hbox{%
    \begin{picture}(1,1)
    \bigcomp@linethickness{#1}
    \put(0.5,0.5){\circle{1}}
    \end{picture}%
  }}%
  \endgroup
}
\newcommand{\bigcomp@linethickness}[1]{%
  \linethickness{%
      \ifx#1\displaystyle 2\fontdimen8\textfont\else
      \ifx#1\textstyle 1.65\fontdimen8\textfont\else
      \ifx#1\scriptstyle 1.65\fontdimen8\scriptfont\else
      1.65\fontdimen8\scriptscriptfont\fi\fi\fi 3
  }%
}
\newcommand{\R}{\mathbb{R}}
\newcommand{\inputnamedtex}[1]{%
	    \includegraphics{tikz/#1-eps-converted-to.pdf}
}
\title{\LARGE \bf
Dictionary-free Koopman model predictive control with nonlinear input transformation
}
\author[1,2]{Vít Cibulka}
\author[1,2]{Milan Korda}
\author[1]{Tomáš Haniš}
\affil[1]{Department of Control Engineering, Faculty of Electrical Engineering,
Czech Technical University in Prague, The Czech Republic

{\tt\small vit.cibulka@fel.cvut.cz, tomas.hanis@fel.cvut.cz}}
\affil[2]{CNRS, Laboratory for Analysis and Architecture of Systems, Toulouse, France
 {\tt\small korda@laas.fr}
}
\definecolor{dkred}{rgb}{0.0,0,0}
\definecolor{dkgreen}{rgb}{0,0.0,0}
\definecolor{corrige}{rgb}{0,0,0}
\definecolor{corrige2}{rgb}{0,0,0}
\definecolor{corrige3}{rgb}{0,0,0.0}
\definecolor{color_MKres}{rgb}{0,0.0,0.0}
\definecolor{corrigeplane}{rgb}{0,0,0}
\definecolor{vclast}{rgb}{0,0.0,0}
\newcommand{\newmk}[1]{{\color{dkgreen}{#1}}}
\newcommand{\COR}[1]{{\color{corrige}{#1}}}
\newcommand{\newcor}[1]{{\color{corrige2}{#1}}}
\newcommand{\corplane}[1]{{\color{corrigeplane}{#1}}}
\newcommand{\vclast}[1]{{\color{vclast}{#1}}}
\newcommand{\corr}[1]{{\color{corrige3}{#1}}}
\definecolor{imb}{rgb}{0,0,1}
\newcommand{\exampleskip}
    {
    \vskip 13pt
    }
\begin{document}

\maketitle
\thispagestyle{plain}
\let\thefootnote\relax\footnotetext{$^*$This work has been supported 
by the Czech Science Foundation (GACR) under contract No. 20-11626Y, 
and by
the Grant Agency of the Czech Technical University in Prague, grant No. SGS22/166/OHK3/3T/13.
This work has also been supported
by the AI Interdisciplinary Institute ANITI funding, through the
French “Investing for the Future PIA3” program under the Grant agreement n$^\circ$ ANR-19-PI3A-0004 as well as by the National Research Foundation, Prime Minister’s Office, Singapore, under its Campus for Research Excellence and Technological Enterprise (CREATE) programme.\\
$^{**}$ This work has been submitted to SIAM Journal on Applied Dynamical Systems (SIADS) and is currently under review.
}


\begin{abstract}
  This paper introduces a method for data-driven control based on the Koopman operator model predictive control. Unlike existing approaches, the method does not require a dictionary and incorporates a nonlinear input transformation, thereby allowing for more accurate predictions with less ad hoc tuning. In addition to this, the method allows for input quantization and exploits symmetries, thereby reducing computational cost, both offline and online. Importantly, the method retains convexity of the optimization problem solved within the model predictive control online. Numerical examples demonstrate superior performance compared to existing methods as well as the capacity to learn discontinuous lifting functions.
\end{abstract}



\section{Introduction}

The Koopman operator approach is a successful framework for data-driven analysis of nonlinear dynamical systems that originated in the 1930s in the seminal works \cite{Koopman1931,koopman1932dynamical} but gained wider popularity only much later in the mid 2000s with the advent of modern-day computing, starting with the works \cite{Mezic2005,mezic2004comparison}. The core idea is to represent the nonlinear system by an infinite-dimensional linear operator acting on the space of functions defined on the state-space (referred to as observables). Finite-dimensional approximations of this operator, computable from data using simple linear algebraic tools, then allow one to gain insight into the underlying dynamics and construct reduced-order models. This  approach can be thought of as lifting of the original nonlinear dynamics into a higher dimensional space where it admits a linear description.

Generalizing this concept to control systems is far from trivial. A promising approach proposed in \cite{Korda_koop} defined the Koopman operator with control as acting on the product of the state-space and the space of all control sequences and used finite-dimensional approximations of this operator within a linear model predictive control scheme, nowadays referred to as the Koopman MPC. The primary benefit is the convexity of the MPC problem solved online with computational complexity compared to classical linear MPC. The primary drawback is the inherent limitation of prediction accuracy. Indeed, the predictors proposed in \cite{Korda_koop} lift only the state whereas the control input remains untransformed and enters linearly into the predictor used within the MPC in order to preserve convexity. Therefore, fixing an initial condition, the mapping from the control input sequence to the output of the predictor is linear whereas the true mapping realized by the nonlinear system is typically nonlinear. A possible remedy is to consider bilinear predictors, which has a long history related to generalizing the classical Carleman linearization~\cite{carleman1932application} to a controlled setting; see, e.g., \cite{germani2005filtering,rauh2009carleman,armaou2014piece,surana2016koopman}. This, however, necessarily spoils the convexity of the MPC problem solved online and is hence not pursued in this work. We refer the reader to the survey~\cite{brunton2021modern} for further discussion regarding the use of the Koopman operator in control.

A second drawback, common in controlled and uncontrolled settings, is the need for a user-specified dictionary of nonlinear lifting functions. Although such dictionaries may be naturally available for measure-preserving ergodic systems in the form of time-delayed measurements of the state \cite{arbabi2017ergodic}, no such choice is available in more general settings and hence past works resorted to ad-hoc dictionaries (e.g., \cite{koop_edmd,Korda_koop}) such as radial basis functions or monomials. Even though efforts have been made to alleviate this by using a general parametrization of the dictionary (e.g., with neural networks \cite{li2017extended}) or by exploiting the dynamics and state-space geometry to construct them \cite{korda2020optimal}, the need for a significant engineering effort in the design process has not been eliminated. 

This work addresses the two principal issues described above as well as demonstrates how input quantization and symmetries of the problem can be exploited. Specifically, we first propose a class of predictors with nonlinear input transformation that preserves convexity of the MPC problem solved online and greatly improves the long-term prediction accuracy. To the best of our knowledge this is the first time that a nonlinear input transformation is employed while preserving convexity of the MPC problem.

Second, we eliminate the need for a dictionary by treating the values of the lifting functions on the available data samples as optimization variables in the (offline) learning process.  Concurrently with our work, a different dictionary-free method for construction of Koopman predictors was  developed in \cite{otto2022learning} using hidden Markov models; however, albeit providing accurate predictions, the models constructed therein are bilinear and hence lead to nonconvex problems when used within MPC.

In addition to this, we allow for input quantization, thereby bringing the method closer to practical applications where the control input is typically quantized.

Finally, we exploit symmetries in designing the predictors, thereby allowing for a more parsimonious parametrization of the predictors and and hence faster learning;  this could be seen as a generalization of the symmetry-exploitation methods without control proposed in \cite{salova2019koopman} to the controlled setting where the interplay between state-space and control-space symmetries must be carefully taken into account.

\vclast{We demonstrate the effectiveness of our approach on several examples, including the 
highly nonlinear vehicle model adopted from our previous work in \cite{Cibulka2021}.
The current research around the Koopman operator for vehicle control includes \cite{Marko1} and \cite{MarkoTV},
which both use the method \cite{Korda_koop} and therefore require user-specified dictionary for the state lifting. On the other hand, the work~\cite{Cibulka2021} does not require an explicit dictionary of lifting functions but 
it still involves a notable engineering effort to construct the state lifting. Our paper uses the same vehicle model as \cite{Cibulka2021}, while minimizing the effort to construct the state lifting and, for the first time, employs
input lifting. The proposed Koopman MPC  significantly outperforms the previous methods on this vehicle model and, interestingly, 
the analysis of the input lifting functions unveils certain physical properties that
can be directly tied to the vehicle dynamics.
}


\textbf{Structure of this paper}
\vclast{The Section \ref{sec:problem_statement} provides the problem statement and
the Section \ref{sec:finding_predictor} presents the main result of this paper.
Then we discuss symmetry exploitations in the Section \ref{sec:symmetry}. 
Various implementation details of the method are presented in Section~\ref{sec:implementation}. The usage of the Koopman operator in control
is presented in the Section \ref{sec:control}, and the concrete form of the Koopman MPC used in this paper is summarized in the Section \ref{sec:summary_mpc}.
The numerical examples are presented in the Section \ref{sec:numerical_examples} and we conclude in the Section \ref{sec:conclusion}.
}

\textbf{Notation}
\newcor{All vectors are assumed to be column vectors.}
The set of integers from \(1\) to \(n\) will be denoted as \(\mathbb{Z}_{1,n}\). 
The cardinality of a set \(A\) is denoted as \(|A|\).
The identity matrix of size \(n\) is \(I_{n}\); the column vector of ones 
of size \(n\) is denoted as \(\mathbf{1}_{n}\). 
The element-wise multiplication is denoted by~\(\odot\).
\newcor{A block-diagonal matrix with blocks $x_1,\ldots,x_n$ on the diagonal is denoted 
as $\textrm{bdiag}(x_1,\ldots,x_n)$. For a vector $x$ and a positive semidefinite matrix $Q$, we denote \(||x||_{Q} := \sqrt{x^{\top}Qx}\).
}

\section{Problem statement}
\label{sec:problem_statement}
Let us consider a discrete-time controlled nonlinear dynamical system
\begin{align}
\begin{split}
	\label{eq:nlsys}
	x_{t+1} &= f(x_t,u_t) \\
	y_t &= g(x_t),
\end{split}
\end{align}
where \(x_t \in X \subset \R^{n_x}\), \(u_t \in U \subset \R^{n_u}\),
\(y_t \in Y \subset \R^{n_y}\) are the state, the input, and the output vectors.

We define the Koopman operator \(\mathcal{K}\) with control input 
similarly as in \cite{Korda_koop}, 
by considering the extended state-space $\R^{n_x} \times \ell(U)$, where $\ell(U) := \{ (u_i)_{i=0}^{\infty} \mid u_i\in U\}$ is the space of all control sequences. We shall denote the elements of $\ell(U)$ by $ \vect{u} := (u_i)_{i=0}^{\infty}$. The extended dynamics is defined by
\begin{equation}
	\chi^+ = F(\chi) = \begin{bmatrix}
		f(x,\vect{u}(0)) \\ \mathcal{S}\vect{u}
	\end{bmatrix},
	\text{ for } \chi(0) = \begin{bmatrix}
		x_0 \\ \vect{u}_0
	\end{bmatrix},
\end{equation}
where \(\chi = (x,\vect{u}) \in \mathbb{R}^{n_x} \times \ell(U) \) is the extended state, \(\vect{u}(0)\) denotes the first element of the sequence \(\vect{u}\), and \(S\) is a shift operator such that \((S\vect{u})(i) = \vect{u}(i+1)\).
The Koopman operator $\mathcal{K} : \mathcal{H}\rightarrow \mathcal{H}$
 is then defined by
\begin{equation}
	(\mathcal{K}\xi)(\chi) := \xi(F(\chi))
\end{equation}
for \(\xi:\R^{n_x}\times \ell(U)\to \R\) that belongs 
to some space of observables \(\mathcal{H}\). The functions $\xi$ that the Koopman operator acts on are referred to as observables. \corr{In this work, we} shall assume the observables in the form 
\begin{equation}
	\xi(\chi) = \begin{bmatrix}
		\Phi(x) \\ \Psi(\vect{u}(0))
	\end{bmatrix}.
\end{equation}


 
 
 
 where \(\Phi: X \rightarrow Z\) is a \corr{vector} state lifting function, \mbox{\(\Psi: U \rightarrow V\)} is the input transformation.
 The spaces \(Z \subset \mathbb{R}^{n_z} \) and \(V \subset \mathbb{R}^{n_v}\) are the Koopman state and input spaces respectively.
 
Note that with this definition, the operator \(\mathcal{K}\) will predict 
the future lifted inputs; we will disregard those 
since we are not interested in the spectral
 properties of the operator. Our goal is linear prediction of the {\color{black}controlled} nonlinear dynamics \eqref{eq:nlsys}.

Furthermore, since \(\mathcal{K}\) is 
 generally infinite-dimensional, we will work with its finite-dimension 
 truncation in the form of an LTI predictor
 \begin{equation}
 \begin{alignedat}{2}
	\label{eq:koop_LTI}
	z_{t+1} &= Az_t + Bv_t &\\
	\hat{y}_t &= Cz_t, &\text{ for } z_0 = \Phi(x_0) \text{ and } v_t = \Psi(u_t),
 \end{alignedat}
 \end{equation}
 
 where \newmk{$A\subset \R^{n_z\times n_z}$, $B\subset \R^{n_z\times n_v}$, $C\subset \R^{n_y\times n_z}$} are matrices to be determined. \newmk{The system~(\eqref{eq:koop_LTI}) is referred \newcor{to} as the \emph{Koopman predictor}; the state \(z_t\) is referred to as the \emph{lifted state}} and \(\hat{y}_t\) is the \newmk{predicted} output of the system \eqref{eq:nlsys}. 


The primary goal of this work is to find \(A,B,C,\Psi,\Phi\), 
\corplane{such that the resulting LTI system \eqref{eq:koop_LTI}  predicts the behaviour of the nonlinear dynamics \eqref{eq:nlsys} on \(X\) for \(H_{\rm T}\) steps ahead. This predictor is then used for controller synthesis within Koopman MPC,
which we describe in Section \ref{sec:control}.}

\section{Finding the Koopman predictor} 
\label{sec:finding_predictor}
In this section we present a method for determining the Koopman predictor~(\eqref{eq:koop_LTI}). We start by giving an intuition: Assuming that we have trajectory data \((y_t,u_t,x_0)\) \corr{for times \(t \in \mathbb{Z}_{0,H_{\rm T}} \)}, 
we want to find the linear system \eqref{eq:koop_LTI} so that 
its output \(\hat{y}_t\) matches the trajectory data  \(y_t\). The distinguishing feature of our work is that we do so in a \emph{dictionary-free} manner, that is, by optimizing the values of \(\Phi\) and \(\Psi\) on the available data  without having to specify a dictionary of basis functions parametrizing \(\Phi\) and \(\Psi\). In addition to optimizing the values of $\Phi$ and $\Psi$, we also optimize over the system matrices $A$, $B$, $C$. See Figure~\ref{fig:koop_expl} for an illustration.


\begin{figure}[!ht]
	\centering
	\includegraphics{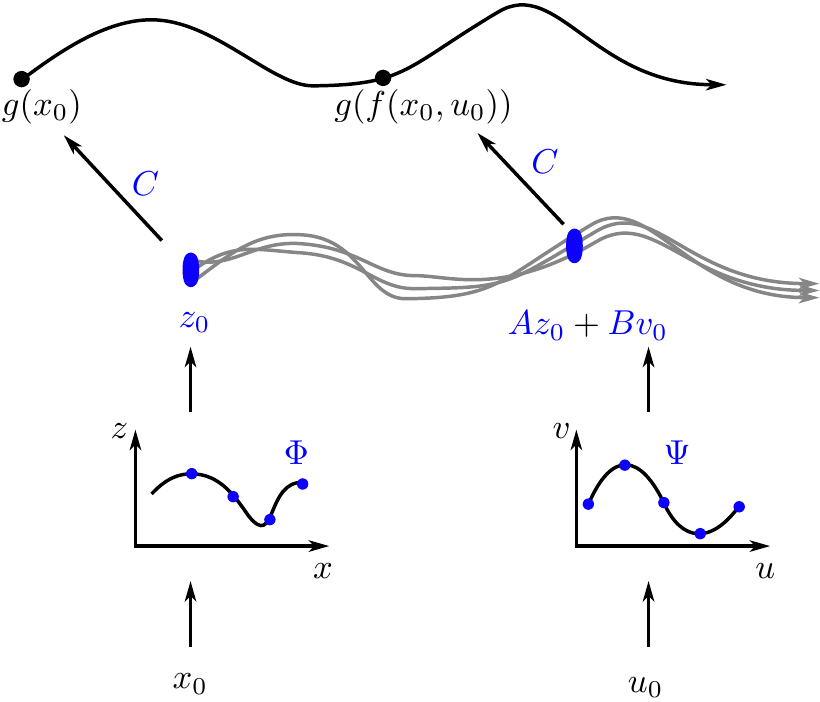}
	\caption{
		The intuition behind the Koopman predictor with control. 
		Both the initial state and the inputs are lifted via the functions 
		\(\Phi\) and \(\Psi\) respectively.
		The linearly evolving high-dimensional trajectory 
		of the Koopman predictor \eqref{eq:koop_LTI} can then be projected onto the original state 
		space, giving us the nonlinear trajectory of \eqref{eq:nlsys}.
		The quantities optimized in our approach are highlighted in blue; this includes the values of \(\Phi\) and \(\Psi\) on the available data. 
  }
		\label{fig:koop_expl}
\end{figure}	

\subsection*{Basic definitions}
\label{sec:basic_def}
The optimization will be done over
\(N\) trajectories of the \corr{nonlinear} system \eqref{eq:nlsys}.
\corr{To define the trajectories,}
we first need to define the sets of initial conditions
and admissible system inputs.

We will assume to have samples \(x^i_0 \) from \(X\), which are the initial states of the aforementioned trajectories of 
\eqref{eq:nlsys}
\begin{equation}
	X_0 = \{x^i_0 \in X :  i \in \mathbb{Z}_{1,N}\}.
\end{equation}
\COR{To each $x^i_0 \in X$, we associate a lifted initial condition \mbox{$z_0^i \in Z_0 \newcor{\subset \mathbb{R}^{n_z}}$}; these will be decision variables in the optimization problem where the control Koopman predictor is learned.} 

We will consider each input channel of the system \eqref{eq:nlsys} individually 
and assume that it is normalized to $[-1,1]$.
\corplane{We also permit quantization of the control signal,
thereby taking into account a digital control implementation or control signals that are discrete in nature  (e.g., gears in a vehicle).} 
The \(k^{\rm th}\) input channel with \(q_k\) quantization levels reads
\begin{equation}
	\label{eq:channelwise}
	U_k \subset [-1, 1],\;\; |U_k| = q_k.
\end{equation}
The full input space can be retrieved as the cartesian 
product of the individual channels
\begin{equation}
	\label{eq:cartprod}
	U = U_1 \times U_2 \times \dots \times U_{n_u}.
\end{equation}

In the optimization process, we will be searching for the  transformed input channels
 \(V_k\), which have the same number 
of quantization levels as \(U_k\):
\begin{equation}
	\label{eq:channelwise_V}
	V_k \subset [V_{k,\rm{min}}, V_{k,\rm{max}}],\;\; |V_k| = q_k.
\end{equation}
We do not make assumptions on the maxima and minima of the lifted 
input channels, since they are optimization variables.

After finding the Koopman predictor, we can retrieve the lifted input space as
\begin{equation}
	\label{eq:Vcart}
	V = V_1 \times V_2 \times \dots \times V_{n_u}.
\end{equation}
\newcor{We will assume that the dimensions of both the original and lifted inputs are the same, i.e. $n_v = n_u$. Although this assumption is not required by the algorithm itself, the benefit of raising the 
input dimension is  \corplane{not investigated} in this paper. 
The values $V_k$ are decision variables to optimize.

}

The Example \ref{ex:quantization} shows the advantage of 
considering the inputs channel-by-channel, instead of working with the 
whole cartesian product \newcor{\eqref{eq:cartprod}}.

\begin{example}
\label{ex:quantization}
Assume that we have two input channels quantized as
\begin{align}
\begin{split}
	\label{eq:ex_sum}
\newcor{U_1 = \{-1,1\},U_2 = \{-1,0.4,1\},}
\end{split}
\end{align}
with \(q_1 = 2\) and \(q_2 = 3\). 
We retrieve \(U\) as
\begin{equation}
	\label{eq:ex_cart}
	U = \left\{
	\begin{bmatrix}
		-1\\-1
	\end{bmatrix},
	\begin{bmatrix}
		1\\-1
	\end{bmatrix},\begin{bmatrix}
		-1\\0.4
	\end{bmatrix},\begin{bmatrix}
		1\\0.4
	\end{bmatrix},\begin{bmatrix}
		-1\\1
	\end{bmatrix},\begin{bmatrix}
		1\\1
	\end{bmatrix}
	\right\}.
\end{equation}

Optimizing over the channels \(V_1\) and \(V_2\) will introduce 
\(\sum_k q_k = 5\) optimization variables.
Should we, however, optimize over the whole set \(V\), we would 
have \mbox{\(n_u\prod_k q_k = 12\)} variables. Considering each channel individually decreases the parameter space of the optimization
and \newcor{therefore} increases the scalability of the algorithm.
It also makes it easier to enforce invertibility of the 
transformation \(\Psi\), which is crucial in control applications.
This is discussed further in Section \ref{sec:invertibility_psi}.
\end{example}
\exampleskip
\newcor{
\paragraph{Representation of quantized inputs} All the input vectors considered further in this section are elements of either  $U$ or $V$,
and can be represented by a linear operator \corplane{$\mathcal{L}: \mathbb{R}^{\sum_k q_k} \rightarrow \mathbb{R}^{n_u}$}.
Let us assume that we have an input $u' \in U$ and a corresponding lifted input $v' \in V$.
We can write $u'$ as
\begin{equation}
    u' = \mathcal{L}'(U_1,U_2,\ldots,U_{n_u}),
\end{equation}
where $\mathcal{L}'$ selects appropriate values from the channels $U_k$
in order to build the vector $u'$. The same operator can be used for the lifted vector $v'$ as
\begin{equation}
    v' = \mathcal{L}'(V_1,V_2,\ldots,V_{n_u}).
\end{equation}
Action of a linear operator can be represented by matrix multiplication;
we shall represent the operator $\mathcal{L}$ by a matrix $L$.
For $u'$ we get
\begin{equation}
\label{eq:input_projection}
    u' = L' \begin{bmatrix}
        U_1^\top & U_2^\top & \ldots & U_{n_u}^\top,
    \end{bmatrix}^{\top}
\end{equation}
and similarly for $v'$, with the same matrix $L'$. The matrix $L$ (and $L'$) is a block-diagonal matrix 
with row one-hot vectors on the diagonal, selecting only one element from each channel; it shall be referred to as \textit{projection matrix}.


The Example \ref{ex:matrixL} demonstrates the linear mapping via the projection matrix $L$.

\begin{example}
\label{ex:matrixL}
Assume $u' = \begin{bmatrix}
    -1 & 0.4
\end{bmatrix}^\top$,
$U_1 = \{-1,1\}$, and $U_2 = \{-1,0.4,1\}$.
The equation \eqref{eq:input_projection} would then look as 
\begin{align}
\begin{split}
	u' &= L' \begin{bmatrix}
    U_1 \\ U_2
\end{bmatrix} \\ 
\begin{bmatrix}
    -1 \\ 0.4
\end{bmatrix} &= 
\begin{bmatrix}
    \begin{bmatrix}
        1 & 0
    \end{bmatrix} & \begin{bmatrixcolor}[white]
        0&0&0
    \end{bmatrixcolor} \\ 
    \begin{bmatrixcolor}[white]
        0&0
    \end{bmatrixcolor} & \begin{bmatrix}
        0 & 1 & 0
    \end{bmatrix}
\end{bmatrix}
\begin{bmatrix}
    -1 \\ 1 \\ -1 \\ 0.4 \\ 1
\end{bmatrix},
\end{split}
\end{align}
therefore $L' = \textrm{bdiag}(\begin{bmatrix}
        1 & 0
    \end{bmatrix},\begin{bmatrix}
        0 & 1 & 0
    \end{bmatrix})$.
    
Assume that we have the lifted input channels defined as $V_1 = \{1,0.2\}$, and $V_2 = \{4,3,0.1\}$. We can calculate the lifted vector $v'$ which corresponds to $u'$ by using the same matrix $L'$ as
\begin{equation}
    v' =
    L'
    \begin{bmatrix}
      V_1 \\ V_2  
    \end{bmatrix}
    =
    \begin{bmatrix}
    \begin{bmatrix}
        1 & 0
    \end{bmatrix} & \begin{bmatrixcolor}[white]
        0&0&0
    \end{bmatrixcolor} \\ 
    \begin{bmatrixcolor}[white]
        0&0
    \end{bmatrixcolor} & \begin{bmatrix}
        0 & 1 & 0
    \end{bmatrix}
\end{bmatrix}
    \begin{bmatrix}
        1 \\ 0.2 \\4 \\3 \\ 0.1
    \end{bmatrix}
     = \begin{bmatrix}
         1 \\ 3
     \end{bmatrix}.
\end{equation}
The correspondence between $u'$ and $v'$ is 
therefore explicitly established by sharing the same projection matrix $L'$.
\end{example}
\exampleskip
}
\paragraph{Dataset for learning} 
A dataset of \(N\) trajectories of the system \eqref{eq:nlsys} \corr{is denoted as \(\mathcal{D}\)}
 where 
a single trajectory \(\mathcal{T}_i \in \mathcal{D}\) with initial condition \(x^i_0 \in X_0\) and length \(H_{\rm T}\) is defined as


	
\begin{equation}
\begin{alignedat}{2}
	\mathcal{T}_i = \{ & (x^i_{s},u^i_s,y^i_{s}) &&\in X \times U \times Y \\ 
	& \text{ s.t. } 
	 x^i_{s+1} &&= f(x^i_s,u^i_s) \quad \forall s \in 0\dots H_{\rm T}-1\\ 
	 &\qquad y^i_s &&= g(x^i_s) \quad \forall s \in 0\dots H_{\rm T}\\
   &\qquad \text{and } u^i_s &&= L^i_s \begin{bmatrix}
        U_1^\top & \ldots & U_{n_u}^\top
    \end{bmatrix}^\top 
    \},
\end{alignedat}
\end{equation}
	\newcor{where $L^i_s$ is a projection matrix 
associated with the input $u^i_s$.
}
\paragraph{Lifting functions} 
We will obtain the state lifting function \newcor{ \(\Phi: \mathbb{R}^{n_z} \rightarrow \mathbb{R}^{n_x}\)
by pairing the vectors $x_0^i$ and $z^i_0$ as 
\begin{align}
\begin{split}
    \label{eq:phidef}
    z_0^i &= \Phi(x_0^i)\qquad \forall i \in \mathbb{Z}_{1,N},
\end{split}
\end{align}
where $z^i_0 \in Z_0$ and $x^i_0 \in X_0$.
}
In this way, we obtain the function $\Phi$ on \(N\) samples. This function is subsequently extended to all of \(X\) through interpolation; this is detailed in Section \ref{sec:interpolation_phi}.

Due to the channel separation, the input lifting function \(\Psi\) is built by concatenating individual functions \mbox{\(\Psi_k : \mathbb{R}\rightarrow \mathbb{R}\)}.
The \(j^\text{th}\) element of the \(k^\text{th}\) channel input vector is transformed as

\begin{align}
	\begin{split}
		v^j_k = \Psi_k(u^j_k) \quad \forall j \in \mathbb{Z}_{1,q_k}, \\\text{ where }
		 u^j_k \in U_k, v^j_k \in V_k.
	\end{split}
\end{align}
\newcor{We} can write \(\Psi: \mathbb{R}^{n_u} \rightarrow \mathbb{R}^{n_u}\) as
\begin{equation}
	\label{eq:psidef}
	\begin{bmatrix}
		v_1 \\ \vdots \\ v_{n_u}
	\end{bmatrix} = 
	\begin{bmatrix}
		\Psi_1(u_1)\\ \vdots \\ \Psi_{n_u}(u_{n_u})
	\end{bmatrix}.
\end{equation}

With the definitions \eqref{eq:phidef} and \eqref{eq:psidef}, optimizing over samples of 
\(Z_0\) and \(V_k\) could be viewed as optimizing over the image of the lifting functions \(\Phi\) and \(\Psi\) \newcor{respectively}.

%


\subsection{Optimization problem setup}
\label{sec:problem_setup}
As a reminder, the output of the linear system \eqref{eq:koop_LTI} 
at time \(k\) for initial 
condition \(z_0^i \in Z_0\) is 
\begin{align}
\begin{split}
	\label{eq:prediction_nice}
	\hat{y}^i_t &= C \left( A^t z^i_0 + \sum_{j=0}^{t-1} A^{t-1-j} B v_j \right) 
\end{split}
\end{align}

To find the Koopman predictor, we will seek to solve the problem
\newcor{
\begin{align}
\begin{split}
	\label{eq:problem_description}
\minimize & \sum_{i=0}^{N} \sum_{t=0}^{H_{\rm T}}
||  C z_t^i - y^i_t ||_2^2 + w_0\sum_{(a,b)\in \mathcal{S}} ||z^a_{H_{\rm T}} - z^{b}_{0}||_2^2 + \theta(\cdotp) \\ 
  \text{s.t. }& z^i_t = \left( A^t z^i_0 + \sum_{j=0}^{t-1} A^{t-1-j} B v^i_j \right) \\ 
 & v^i_j = L^i_j \begin{bmatrix}
     V_1^\top & \ldots & V_{n_u}^\top
 \end{bmatrix}^\top
\end{split}
\end{align}
where \corplane{\(A,B,C,z_0^i,V_k\)} are the decision variables,
\corplane{\( \mathcal{S}, L^i_j,y^i_t\in\mathcal{D}\)} are the problem data,}
and \(H_{\rm T}\) and \(N\) are the trajectory length and number of trajectories respectively. \newcor{The scalar $w_0$ is a weighting
parameter.}
The set \(\mathcal{S}\) contains indices of consecutive trajectories so for any \((a,b)\in \mathcal{S}\), \(\mathcal{T}_a\) ends at the same point where \(\mathcal{T}_b\) begins; \newcor{the reasoning behind this 
regularization is explained further below, in the Section \ref{sec:trajectory_preparation}}. 

The first term in the cost function ensures the fit 
of the data; the second one promotes invariance of \(Z_0\) by connecting the consecutive trajectories in the lifted space, it can also be thought of as a regularization term.
The last term, \(\theta\), is a placeholder for optional regularization, such as enforcing invertibility
and symmetry of the input transformation \(\Psi\)
 which is discussed in the Section \ref{sec:implementation}
 and below Corrolary \ref{sec:corrolary} respectively.
\vclast{The problem \eqref{eq:problem_description} can be simplified in terms of the number of variables and 
data requirements by exploiting symmetries of system \eqref{eq:nlsys}, which will be discussed in the next section.}

\section{Exploiting symmetry}
\label{sec:symmetry}

Assuming that the nonlinear system \eqref{eq:nlsys} has a symmetry, we can exploit 
this symmetry in our algorithm to decrease the size of the dataset by imposing structure onto the matrices \((A,B,C)\), which will guarantee that the Koopman predictor will respect the same symmetries as \eqref{eq:nlsys}. This means that the learning dataset \(\mathcal{D}\) will not need to contain symmetric trajectories, since the symmetry will be implicitly enforced by the structure of \((A,B,C)\), thus decreasing the size of \(\mathcal{D}\), $Z_0$, and the number of decision variables in \((A,B,C)\).

We consider state-control symmetries with respect to groups $\Gamma^x \subset \mathrm{GL}_{n_x}$ and $\Gamma^u \subset \mathrm{GL}_{n_u}$, where $\mathrm{GL}_n$ denotes the group of invertible matrices of size $n$-by-$n$. The group elements are denoted by $\gamma^x \in \Gamma^x$ and $\gamma^u\in\Gamma^u$ and the group action is the standard matrix multiplication. The two groups are assumed to be related by a group homomorphism $h:\Gamma^x\to\Gamma^u$, i.e., $h(\gamma^x_1\gamma^x_2) = h(\gamma_1^x)h(\gamma_2^x)$. A dynamical system is said to have a symmetry with respect to $(\Gamma^x,\Gamma^u,h)$ if for all $\gamma^x \in \Gamma^x$ it holds
\begin{equation}
	\label{eq:symmetry_def}
	f(\gamma^x x, \gamma^u u) = \gamma^x f(x,u),
\end{equation}
where $\gamma^u = h(\gamma^x)$.
Our goal is to find a linear predictor whose output will respect the symmetry with respect to $\Gamma^x$. In order to do so, we will construct a symmetry group $\Gamma^z\subset \mathrm{GL}_{n_z}$ and a group homomorphism $h':\Gamma^{x}\to \Gamma^z$ such that
\begin{equation}
\begin{alignedat}{2}
	\label{eq:symmetry_lti_def}
	A \gamma^z z + B \gamma^u v &= \gamma^z (Az + Bv) \\ 
	C \gamma^z z &= \gamma^x Cz 
\end{alignedat}
\end{equation}
for all $\gamma^x\in \Gamma^x$, where $\gamma^z = h'(\gamma^x)$ and $\gamma^u = h(\gamma^x)$. This implicitly assumes that the input symmetries are  the same for the original system and the predictor;
the requirements on the transformations  \(\Phi\) and \(\Psi\) for this to hold are stipulated later in Corollary \ref{sec:corrolary}.

To simplify the exposure, this section assumes  
that the output of the predictor \eqref{eq:symmetry_lti_def}
is prediction of the nonlinear state \(x\).
Furthermore, we shall consider only sign symmetries, meaning that $(\Gamma^x,\Gamma^u,\Gamma^z)$ are subsets of diagonal matrices with $+1$ or $-1$ on the diagonal.





	We shall simplify the notation by denoting the diagonal
	matrices	as vectors containing the diagonal
	whenever clear from the context.

\begin{example}
	\label{ex:symm_gammas}
Assume the following discrete dynamical system
\(x^+ = f(x,u)\) (we drop the index \(k\) for readability):
\begin{equation}
	\label{eq:symm_example}
\begin{alignedat}{2}
x^+_1 &= -x_{1}u_1 - |x_{2}|\\
x^+_{2} &= -x_{2} + u_2 + u_3\\ 
x^+_{3} &= -x_{3}|x_{2}| + u_2 + u_3\\
x^+_{4} &= -x_{4}.
\end{alignedat}
\end{equation}
The set \(\Gamma^x\) is 
\begin{equation}
	\label{eq:ex_Gammax}
	\Gamma^x = \left\{ 
		\begin{bmatrix}
		1\\1\\1\\1
	\end{bmatrix},
	\begin{bmatrix}
		1\\-1\\-1\\1
	\end{bmatrix},
	\begin{bmatrix}
		1\\1\\1\\-1
	\end{bmatrix},
	\begin{bmatrix}
		1\\-1\\-1\\-1
	\end{bmatrix} 
	\right\}
\end{equation}
and the pairs \((\gamma^x,\gamma^u)\) can take following values:
\begin{equation}
\begin{alignedat}{2}
	&(\gamma^x,\gamma^u) = (\gamma^x,h(\gamma^x)) \in \\
	&\left\{
	\left(
	\begin{bmatrix}
		1 \\ 1 \\ 1\\1
	\end{bmatrix},
	\begin{bmatrix}
		1\\1\\1
	\end{bmatrix}
	\right),
	\left(
		\begin{bmatrix}
			1 \\ -1 \\ -1\\1
		\end{bmatrix},
		\begin{bmatrix}
			1\\-1\\-1
		\end{bmatrix}
		\right),
		\left(
			\begin{bmatrix}
				1 \\ 1 \\ 1\\-1
			\end{bmatrix},
			\begin{bmatrix}
				1\\1\\1
			\end{bmatrix}
			\right),
			\left(
				\begin{bmatrix}
					1 \\ -1 \\ -1\\-1
				\end{bmatrix},
				\begin{bmatrix}
					1\\-1\\-1
				\end{bmatrix}
				\right)\right\}.
\end{alignedat}
\end{equation}
\end{example}
\exampleskip
We see that some states can 
change their signs together (\(x_2\) and \(x_3\)
in the Example \ref{ex:symm_gammas}) and it will be useful to 
group these together. To this end, let \[
\mathcal{I}({\Gamma^x}) = (\mathbb{I}_1,\ldots,\mathbb{I}_{n_{\Gamma}})
\]
where each index set $\mathbb{I}_i \subset \mathbb{Z}_{1,n_x}$ satisfies the following two conditions:
\begin{enumerate}
     \item $\gamma_j^x = \gamma_k^x$ for all $j,k\in \mathbb{I}_i$ (states indexed by $\mathbb{I}_i$ change sign together).
     \item $\mathbb{I}_i$ is maximal (no indices can be added to $\mathbb{I}_i$  without violating the first condition)
\end{enumerate}
This implies that the index sets $\mathbb{I}_i$ are disjoint and hence $\sum_{i=1}^{n_{\Gamma}} |\mathbb{I}_{i}| = n_x$. We shall assume that the index sets are ordered in an increasing order, i.e., if $i > j$, then $k > l$ for all $k \in \mathbb{I}_i$ and $l \in \mathbb{I}_j$. This can be achieved without loss of generality by reordering the states. Coming back to Example~\ref{ex:symm_gammas},
	we get
\(\mathcal{I}({\Gamma^x}) = (\{1\},\{2,3\},\{4\} )\).

In order to enforce the symmetry in the Koopman predictor, 
we will define  the groups \(\Gamma^z\) and \(\Gamma^v\) for the lifted vectors \(z \in Z\) and \(v \in V\). 
For the input, we use the assumption \(\Gamma^v = \Gamma^u\).
For defining \(\Gamma^z\), we need to impose some structure
onto the vector \(z\).
 We do this by fixing the sparsity pattern of the \(C\) matrix in order to explicitly link the elements of \(z\) with 
 the elements of \(x\). 
 The structure of the matrix \(C\)
 will be block-diagonal
 \begin{equation}
	\label{eq:C_symm_def}
	C = \text{bdiag}(c_1^\top,\ldots,c_{n_x}^\top),
 \end{equation}
 where 
 $c_i$ are vectors of user-specified length; the lengths determine
 how many elements of \(z\) will be used for reconstruction of 
 elements of \(x\) which can be retrieved as 
\begin{equation}
	\label{eq:C_def}
	x^i = c_i^\top z^{c_i},
\end{equation}
where \(z^{c_i}\) is a part of the vector \(z\) which corresponds to $c_i$ and
has length $|c_i|$. In this work we use \(|c_i| = \nicefrac{n_z}{n_x},\; i = 1\dots n_x\).
The whole vector $z$ can be written as
$z = \begin{bmatrix}
    {z^{c_1}}^\top & \ldots & {z^{c_{n_x}}}^\top
\end{bmatrix}^\top$.
The set \(\Gamma^z\) is defined as
\begin{equation}
	  \Gamma^z =  \{ \text{bdiag}
(		\gamma^x_1 I_{|c_1|}, \dots, \gamma^x_{n_x} I_{|c_{n_x}|})
: \gamma^x \in \Gamma^x \},
\end{equation}
so that the elements of \(\gamma^x\) are multiplied with the 
corresponding parts of \(z\), i.e.,
\begin{equation}
	\label{eq:gammaz_def}
	\gamma^z z = \begin{bmatrix}
		\gamma^x_{1}I_{|c_1|} & & \\ 
		& \ddots & \\ 
		&& \gamma^x_{n_x}I_{|c_{n_x}|}
	\end{bmatrix}\begin{bmatrix}
		z^{c_{1}} \\ \vdots \\ z^{c_{n_x}}
	\end{bmatrix} = \begin{bmatrix} \gamma_1^x z^{c_1} \\ \vdots \\ 
 \gamma^x_{n_x} z^{c_{n_x}}  \end{bmatrix}.
\end{equation}
Taking into consideration that 
some states change signs together (\(|\mathbb{I}_i| > 1\) for some \(i\)), we can write the same set as
\begin{equation} 
	\label{eq:gamma_z_blockA}
	\Gamma^z  = \left\{ \text{bdiag}
(		\gamma^x_{\mathbb{I}_1} I_{\nu_1}, \dots, \gamma^x_{\mathbb{I}_{n_{\Gamma}}} I_{\nu_{n_\Gamma}})
: \gamma^x \in \Gamma^x, \mathbb{I} \in \mathcal{I}(\Gamma^x), 
\nu_i = \sum_{k \in \mathbb{I}_i}|c_k|
 \right\},
\end{equation}
where \(\gamma^x_{\mathbb{I}_i}\) is to be understood as scalar, since 
all the elements of \(\gamma^x\) have the same value
 at indices \(\mathbb{I}_i\) by definition. The scalars \(\nu_i\) represent the number of lifted states corresponding to the original states indexed by \(\mathbb{I}_i\), therefore we get \(\sum_{i=1}^{n_\Gamma} \nu_i = n_z\).
 Notice that \(\Gamma^z\), and therefore \(h'\), depends on the pattern of \(C\) which 
 is the connecting element between the vectors \(x\) and \(z\).
 This means that not only the dimension, but also the structure of the lifted space can be tuned in order 
 to obtain good prediction performance.

The matrix $A$ will have a block-diagonal sparsity pattern
with the same block sizes as in \eqref{eq:gamma_z_blockA}.
\begin{equation}
	\label{eq:A_symmetry_def}
    A = \textrm{bdiag}(A_{1},\ldots,A_{n_{\Gamma}}),\text{ where }
    A_{i} \in \mathbb{R}^{\nu_i \times \nu_i} \text{ and }
	\nu_i = \sum_{k \in \mathcal{I}(\Gamma^x)_i}|c_k|.
\end{equation}
The matrix \(B \in \mathbb{R}^{n_z \times n_u}\) will also have block-diagonal sparsity 
pattern:
\begin{equation}
\label{eq:B_symmetry_def}
	B_{i,k} \in \begin{cases}
        \mathbb{R} &\text{ if } h'(\gamma^x)_i = h(\gamma_x)_k\,\forall\gamma^x\in\Gamma^x\\ 
        0 & \text{ otherwise }
    \end{cases}
\end{equation}
where $B_{i,k} = \mathbb{R}$ signifies that the entry is not fixed to zero. 
\corr{
Coming back to the Example \ref{ex:symm_gammas} ,
the predictor would have the structure 
\begin{equation}
\begin{alignedat}{2}
z^+ &=  \begin{bmatrix}
    A_{11} & 0      & 0       & 0 \\
    0      & A_{22} & A_{23}  & 0 \\
    0      & A_{32} & A_{33}  & 0 \\
    0      & 0      & 0       & A_{44}
\end{bmatrix}z + \begin{bmatrix}
    B_{11} & 0      & 0 \\
    0      & B_{22} & B_{23} \\
    0      & B_{32} & B_{33} \\
    0      & 0      & 0 \\
\end{bmatrix}v \\
\hat{x} &= \begin{bmatrix}
	c_1^\top & 0 & 0 &0\\
    0 & c_2^\top & 0 &0\\
    0 & 0 & c_3^\top &0\\
	0 & 0 & 0 & c_4^\top\\
\end{bmatrix}z,
\end{alignedat}
\end{equation} 
where \(c_i\) are vectors with user-selected lengths, and 
\(A_{i,j}\) and \(B_{i,j}\) are matrices of appropriate sizes.
}

We say that the LTI system \(z^+ = Hz + Gv, x = Fz\) respects the output and input  symmetries \(\gamma^x\) and \(\gamma^v\) respectively, if for every trajectory of the LTI system \((z^a_t,x^a_t,u^a_t)_{t = 0}^{\infty}\),
there exists a sequence $(z_t^b)_{t=0}^{\infty}$
such that \((z^b_t,\gamma^x x^a_t,\gamma^u u^a_t)_{t=0}^{\infty}\) is also its trajectory.

\begin{lemma}
	\label{lemma:symmetric_lti}
	Assume an observable LTI system \(z^+ = Hz + Gv, x = Fz\), where the output \(x\) 
	and input \(u\) respect the symmetries \(\Gamma^x\) and \(\Gamma^v\).
	The system can be transformed into an equivalent form,
	which has the same output \(x\), its state respects the symmetry \(\Gamma^z\),
	 and its state matrices have 
	the block-diagonal form introduced above.
\end{lemma}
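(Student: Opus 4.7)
The plan is to derive a change of coordinates on $\mathbb{R}^{n_z}$ from the representation of $\Gamma^x$ induced by the symmetry, and then apply elementary representation theory of finite abelian groups to obtain the claimed block structure. I expect the proof to split into three steps: constructing an intertwining operator $T_{\gamma^x}$ on the state space, decomposing $\mathbb{R}^{n_z}$ into isotypic components, and performing the change of basis.

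First, for each $\gamma^x \in \Gamma^x$ I would construct a linear operator $T_{\gamma^x}$ on $\mathbb{R}^{n_z}$ that realises the symmetry at the state level. Applying the symmetry to a zero-input trajectory with initial condition $z_0$ and invoking observability of $(H,F)$, the symmetric trajectory has a unique initial state, which I define to be $T_{\gamma^x} z_0$; linearity in $z_0$ is inherited from the LTI dynamics. This yields $F H^t T_{\gamma^x} = \gamma^x F H^t$ for every $t \geq 0$, factoring into $F T_{\gamma^x} = \gamma^x F$ and (by observability) $T_{\gamma^x} H = H T_{\gamma^x}$. For the third relation $T_{\gamma^x} G = G\, h(\gamma^x)$, I would apply the symmetry to a single-pulse input placed at time $k \geq n_z$ starting from $z_0 = 0$: existence of the symmetric initial state, combined with the two previous intertwinings, forces this initial state to lie in $\bigcap_{t=0}^{k} \ker(F H^t)$, which equals $\{0\}$ by observability, and this collapse propagates to give $T_{\gamma^x} G = G\, h(\gamma^x)$. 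Uniqueness of $T_{\gamma^x}$ and composition of symmetries show that $\gamma^x \mapsto T_{\gamma^x}$ is a group homomorphism whose image consists of involutions, because every element of $\Gamma^x$ is.

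Next, since $\Gamma^x$ is abelian, the commuting family $\{T_{\gamma^x}\}$ of real involutions is simultaneously diagonalisable with eigenvalues $\pm 1$, yielding an isotypic decomposition $\mathbb{R}^{n_z} = \bigoplus_\chi V_\chi$ indexed by characters $\chi : \Gamma^x \to \{\pm 1\}$. The intertwining $F T_{\gamma^x} = \gamma^x F$ forces $F(V_\chi)$ to transform under $\chi$ in the $\Gamma^x$-action on $\mathbb{R}^{n_x}$, but the only characters appearing in that action are the $n_\Gamma$ characters $\chi_j(\gamma^x) := \gamma^x_k$, $k \in \mathbb{I}_j$, determined by the equivalence classes $\mathcal{I}(\Gamma^x)$; observability combined with the $H$-invariance of each $V_\chi$ (from $T_{\gamma^x} H = H T_{\gamma^x}$) then forces $V_\chi = \{0\}$ for any other $\chi$. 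I would then pick a basis adapted to this decomposition, grouping within each $V_{\chi_j}$ the vectors by their contribution through $F$ to each coordinate $x_k$, $k \in \mathbb{I}_j$, with $|c_k|$ basis vectors allotted to state $x_k$. Letting $P$ be the resulting change-of-basis matrix, $T_{\gamma^x}$ becomes precisely $\gamma^z = h'(\gamma^x)$ as in \eqref{eq:gammaz_def}, and the transformed matrices $A = P H P^{-1}$, $B = P G$, $C = F P^{-1}$ satisfy $A \gamma^z = \gamma^z A$, $\gamma^z B = B\, h(\gamma^x)$ and $C \gamma^z = \gamma^x C$ for every $\gamma^x$; these commutation relations translate directly into the block-diagonal form \eqref{eq:A_symmetry_def} for $A$, the block-diagonal form \eqref{eq:C_symm_def} for $C$, and the sparsity pattern \eqref{eq:B_symmetry_def} for $B$.

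The main obstacle I anticipate is the derivation of $T_{\gamma^x} G = G\, h(\gamma^x)$ in the first step: the two zero-input intertwinings come almost for free, whereas the third requires the pulse-at-late-time trick that leverages observability to collapse a nested intersection of kernels down to $\{0\}$. Beyond that, the isotypic decomposition is standard representation theory of finite abelian groups over $\mathbb{R}$, and the final change of basis is bookkeeping adapted to the prescribed block sizes $|c_k|$.
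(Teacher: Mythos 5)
Your proof is correct in its essentials but takes a genuinely different route from the paper. The paper's proof passes to observer canonical form, explicitly builds a similarity transformation $T$ row by row so that $C=\hat{C}T$ acquires the block-diagonal form \eqref{eq:C_symm_def}, and then \emph{verifies} that the prescribed sparsity patterns \eqref{eq:A_symmetry_def} and \eqref{eq:B_symmetry_def} are \emph{sufficient} for the commutation relations $A\gamma^z=\gamma^z A$ and $B\gamma^v=\gamma^z B$; it does not actually derive that $T^{-1}\hat{A}T$ and $T^{-1}\hat{B}$ inherit those patterns. Your argument runs in the necessity direction: the intertwiner $T_{\gamma^x}$ built from observability and the trajectory-level definition of ``respects the symmetry'' (including the pulse-at-late-time argument for $T_{\gamma^x}G=G\,h(\gamma^x)$, which is the genuinely non-trivial step and which you carry out correctly), the homomorphism and involution properties, simultaneous diagonalisation of the commuting involutions, and the elimination of extraneous characters via observability together \emph{force} the block structure after the change of basis. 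This is heavier machinery, but it closes the logical gap the paper leaves open, and it is the more faithful proof of the lemma as stated. The one place where you are as loose as the paper is the final bookkeeping: allotting $|c_k|$ basis vectors of $V_{\chi_j}$ to the coordinate $x_k$ silently assumes both that $\dim V_{\chi_j}=\sum_{k\in\mathbb{I}_j}|c_k|$ (the user-specified block lengths must be compatible with the dimensions of the isotypic components) and that the functionals $v\mapsto (Fv)_k$, $k\in\mathbb{I}_j$, are linearly independent on $V_{\chi_j}$ (otherwise some block $c_k$ degenerates to zero); neither point is addressed in the paper's proof either, so this is a shared rather than a new gap.
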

\begin{proof}


	

	We can assume that the original system
	\((H,G,F)\) has been transformed into its observer form
	\((\hat{A},\hat{B},\hat{C})\)
	\cite[Section 6.4.2]{linear_primer} and its states have been reordered
	such that
	\begin{equation}
		\hat{C} = \begin{bmatrix}
			1 & 0 & \dots &0 & 0 & \dots &0 \\
			\times & 1 & \dots &0& 0 & \dots &0 \\
 			  &   & \ddots & & 0 & \dots &0\\
			\times & \times & \dots &1& 0 & \dots &0 \\
		\end{bmatrix},
	\end{equation}
	where \(\times\) are unfixed entries.
	We want to find a full-rank matrix \(T\) to perform the similarity transformation
	\begin{subequations} \label{eq:lti_trans}
		\begin{align}
		{A} &= T^{-1} \hat{A} T \\ 
		{B} &= T^{-1} \hat{B} \\ 
		{C} & = \hat{C}T, \label{eq:lti_trans_output}
		\end{align}
	\end{subequations}
	 with \({C}\) having the block-diagonal structure used in \eqref{eq:C_symm_def}. 
We can write \({C} = \hat{C}T\) as
	\begin{equation}
		\label{eq:trans_matrix}
		\begin{bmatrix}
			c_1^\top &0&0&0\\
			0&c_2^\top&0&0\\
			0 & 0 & \ddots & 0\\
			0&0 &0&c_{n_x}^\top
		\end{bmatrix}
		=
		\begin{bmatrix}
			1 & 0&0&0&0 & \dots &0 \\
			\hat{c}_{2,1} & 1&0&0&0&\dots &0\\
			\vdots && \ddots & &0&\dots &0\\ 
			\multicolumn{3}{c}{\hat{c}_{n_x,1:(n_x-1)}} &1&0&\dots &0
		\end{bmatrix}
		\begin{bmatrix*}
			c_1^\top                &0                       &0 &0\\
			  T_{2,1}                        &c_2^\top &0 &0\\
			  \vdots&&\ddots&0\\
			T_{n_x,1} & T_{n_x,2} & ... & c_{n_x}^\top \\ 
			\cmidrule(lr){1-4}
			\multicolumn{4}{c}{T_{\rm F}}
		\end{bmatrix*}
	\end{equation}
where \(T_{i,j}\) is a row block of length \(|c_i|\) such that
\begin{equation}
	T_{i,j} =
	\begin{cases}
		 - \sum_{k=1}^{i-1} \hat{c}_{i,k} T_{k,j}, & \mbox{if } i >j\\
		 c_i^\top, & \mbox{if } i = j \\
		 0 & \mbox{if } i < j.
	\end{cases}
\end{equation}
The first \(n_x\) rows of \(T\) will have full row rank with this construction, since the observability assumption 
implies \(c_i \neq \mathbf{0}_{|c_i|}\).
The remainder (matrix \(T_{\rm F}\)) is multiplied by \(0\) in the matrix \(\hat{C}\),
therefore it can be freely chosen such that \(T\) has full rank.

We shall demonstrate the construction of \(T\) on a simple example with
\(n_y=3,n_z = 6\), and \(c_1 = [1,2], c_2 = [3,0], c_3 = [5,6]\). 
The equation \eqref{eq:trans_matrix} is then
	\begin{equation}
		\begin{bmatrix}
			1 & 2&0&0&0&0\\
			0&0&3&0&0&0\\
			0 & 0&0&0&5&6
		\end{bmatrix}
		=
		\begin{bmatrix}
			1 & 0&0&0&0&0\\
			a & 1&0&0&0&0\\
			b & c&1&0&0&0
		\end{bmatrix}\begin{bmatrix}
			1 & 2&0&0&0&0\\
			-a & -2a&3&0&0&0\\
			ac-b & 2(ac-b)&-3c&0&5&6\\
			\cmidrule(lr){1-6}
			\multicolumn{6}{c}{T_{\rm F}}
		\end{bmatrix}.
	\end{equation}
We see that the matrix \(T\) has full rank (provided \(T_{\rm F}\) has full row rank)
for all values of the unfixed entries.

Having the matrix \(C\) in the block-diagonal form,
we will use the definition of \(\Gamma^z\) \eqref{eq:gamma_z_blockA} and the sparsity patterns 
of \(A\) \eqref{eq:A_symmetry_def} and \(B\) \eqref{eq:B_symmetry_def} to show that 

\begin{equation}
	A(\gamma^z z) + B(\gamma^v v) = \gamma^z (Az + Bv)
	\quad \forall \  \Gamma^x,
\end{equation}
where \(\gamma^x \in \Gamma^x\), \(\gamma^v = h(\gamma^x)\),
and \(\gamma^z = h'(\gamma^x)\).
The equation can be separated into two conditions
\begin{equation}
	\label{eq:proof_comm_A}
	A \gamma^z = \gamma^z A,
\end{equation}
and 
\begin{equation}
	\label{eq:proof_comm_B}
	B \gamma^v = \gamma^z B.
\end{equation}
\corr{
The equation \eqref{eq:proof_comm_A} is a product 
of two block-diagonal matrices with the same block structure
 with 
the definitions \eqref{eq:gamma_z_blockA} and \eqref{eq:A_symmetry_def}. Block-diagonal matrices commute if and only if their blocks commute and since the blocks of \(\gamma^z\) are scalar
matrices, they commute with every matrix and
therefore \eqref{eq:proof_comm_A} holds.
}

The second equation \eqref{eq:proof_comm_B} can be written as 
\newcommand*{\vertbar}{\rule[-1ex]{0.5pt}{2.5ex}}
\newcommand*{\horzbar}{\rule[.5ex]{2.5ex}{0.5pt}}
\begin{equation}
	\begin{bmatrix}
		\vertbar & \vertbar &  & \vertbar \\ 
		\gamma_1^v & \gamma_2^v & \dots & \gamma_{n_v}^v \\ 
		\vertbar & \vertbar &  & \vertbar \\ 
	\end{bmatrix}
	\odot  B - 
	\begin{bmatrix}
		\horzbar & \gamma_1^z & \horzbar \\ 
		\horzbar & \gamma_2^z & \horzbar \\ 
		 & \vdots & \\
		\horzbar & \gamma_{n_z}^z & \horzbar \\ 
	\end{bmatrix}
	\odot B  = 0,
\end{equation}
after factoring out \(B\), we obtain
\begin{equation}
	\label{eq:B_factor}
	\left(
	\begin{bmatrix}
		\vertbar & \vertbar &  & \vertbar \\ 
		\gamma_1^v & \gamma_2^v & \dots & \gamma_{n_v}^v \\ 
		\vertbar & \vertbar &  & \vertbar \\ 
	\end{bmatrix}
	-
	\begin{bmatrix}
		\horzbar & \gamma_1^z & \horzbar \\ 
		\horzbar & \gamma_2^z & \horzbar \\ 
		 & \vdots & \\
		\horzbar & \gamma_{n_z}^z & \horzbar \\ 
	\end{bmatrix}
	\right) \odot B = 0.
\end{equation}
Let us call the matrix in parentheses \(D\) and obtain 
\begin{equation}
	\label{eq:B_condition}
	D \odot B = 0.
\end{equation}
The matrix \(D\) will be 0 only in places, where 
the elements of \(\gamma^v\) and \(\gamma^z\) are equal
\begin{equation}
	\gamma^z_k = \gamma_p^v \implies D_{k,p} = 0\text{ such that } \gamma^v = h(\gamma^x),\gamma^z = h'(\gamma^x),
	\forall \gamma^x \in \Gamma^x.
\end{equation}
It is on these indices where the matrix \(B\) can have nonzero entries,
it has to be zero everywhere else in order for \eqref{eq:B_condition} to hold.
We see from \eqref{eq:B_symmetry_def} that the matrix \(B\) fulfills this condition by definition.


Let us show an example with \(n_u = 2\), \(n_z = 3\),
\(\gamma^z = [1, s, s]^\top\) and \(\gamma^v = [1, s]^\top\), where \( s \in \{-1,1\}\).
\begin{equation}
	\begin{bmatrix}
		b_1 & b_2 \\
		b_3 & b_4 \\
		b_5 & b_6 \\
	\end{bmatrix}
	\begin{bmatrix}
		1 & 0 \\ 
		0 & s
	\end{bmatrix}
	= 
	\begin{bmatrix}
		1 & 0 & 0 \\
		0 & s & 0 \\
		0 & 0 & s \\
	\end{bmatrix}
	\begin{bmatrix}
		b_1 & b_2 \\
		b_3 & b_4 \\
		b_5 & b_6 \\
	\end{bmatrix}
\end{equation}
\begin{equation}
	\begin{bmatrix}
		\bm{b_1} & sb_2 \\ 
		b_3 & \bm{sb_4} \\
		b_5 & \bm{sb_6} \\
	\end{bmatrix}
	= 
	\begin{bmatrix}
		\bm{b_1} & b_2 \\
		sb_3 & \bm{sb_4} \\
		sb_5 & \bm{sb_6} \\
	\end{bmatrix}.
\end{equation}
The bold elements have the same value for all \(s\),
we see that \(b_2,b_3\), and \(b_5\) have to be zero in order for the 
equation to hold.
 The matrix \(B\) will then be 
\begin{equation}
	B = \begin{bmatrix}
		b_1 & 0 \\
		0 & b_4 \\
		0 & b_6 \\
	\end{bmatrix}.
\end{equation}
\end{proof}


\begin{corollary}
	\label{sec:corrolary}
	Assume a nonlinear system \(x^+ = f(x,u)\),
	 its predictor \(z^+ = Az + Bv, \hat{x} = Cz\)
	both with symmetry \(\Gamma^x\) according to \eqref{eq:symmetry_def} and \eqref{eq:symmetry_lti_def},
and symmetric lifting functions \(\Phi\) and \(\Psi\)
such that \(\gamma^z \Phi(x) = \Phi(\gamma^x x)\) and 
\(\gamma^v \Psi(u) = \Psi(\gamma^u u)\),
where 
\(\gamma^x \in \Gamma^x\), \(\gamma^v=\gamma^u = h(\gamma^x)\),
and \(\gamma^z = h'(\gamma^x)\).


	If \((\hat{x}_t)_{t=0}^{\infty}\) and \((\widehat{\gamma^x x})_{t=0}^{\infty}\)
	are predictions of the symmetrical trajectories \((x_t)_{t=0}^{\infty}\) and \((\gamma^x x_t)_{t=0}^{\infty}\) respectively,
	then \((\widehat{\gamma^x x})_{t=0}^{\infty}\) is equal to  
	symmetrized \((\hat{x}_t)_{t=0}^{\infty}\), i.e.
	\((\widehat{\gamma^x x})_{t=0}^{\infty} = (\gamma^x \hat{x}_t)_{t=0}^{\infty} \),
	achieving the same prediction error as \((\hat{x}_t)_{t=0}^{\infty}\) achieves for predicting \((x_t)_{t=0}^{\infty}\).





\end{corollary}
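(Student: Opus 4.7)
The plan is to prove the corollary by induction on the prediction horizon, showing that the entire lifted trajectory of the symmetric prediction equals the symmetric image of the original lifted trajectory. Let $(z_t)_{t=0}^\infty$ be the lifted trajectory generated by \eqref{eq:koop_LTI} with $z_0 = \Phi(x_0)$ and $v_t = \Psi(u_t)$, whose output is $(\hat{x}_t)_{t=0}^\infty = (Cz_t)_{t=0}^\infty$. Let $(\tilde{z}_t)_{t=0}^\infty$ be the lifted trajectory produced from initial condition $\tilde{z}_0 = \Phi(\gamma^x x_0)$ and inputs $\tilde{v}_t = \Psi(\gamma^u u_t)$, whose output is $(\widehat{\gamma^x x}_t)_{t=0}^\infty = (C\tilde{z}_t)_{t=0}^\infty$. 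The claim reduces to showing $\tilde{z}_t = \gamma^z z_t$ for all $t$.

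For the base case I would use the assumed symmetry of $\Phi$ directly: $\tilde{z}_0 = \Phi(\gamma^x x_0) = \gamma^z \Phi(x_0) = \gamma^z z_0$. For the inductive step, suppose $\tilde{z}_t = \gamma^z z_t$; by the assumed symmetry of $\Psi$ we also have $\tilde{v}_t = \Psi(\gamma^u u_t) = \gamma^v \Psi(u_t) = \gamma^v v_t$. Then
\begin{equation}
\tilde{z}_{t+1} = A\tilde{z}_t + B\tilde{v}_t = A\gamma^z z_t + B\gamma^v v_t = \gamma^z(A z_t + B v_t) = \gamma^z z_{t+1},
\end{equation}
where the third equality invokes the first part of \eqref{eq:symmetry_lti_def} established in Lemma~\ref{lemma:symmetric_lti}. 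Applying $C$ to both sides and using the second part of \eqref{eq:symmetry_lti_def} yields $\widehat{\gamma^x x}_t = C\tilde{z}_t = C\gamma^z z_t = \gamma^x C z_t = \gamma^x \hat{x}_t$, which is the desired identity.

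For the equal-error statement, I would note that the true response of \eqref{eq:nlsys} from initial condition $\gamma^x x_0$ driven by $\gamma^u u_t$ is exactly $(\gamma^x x_t)_{t=0}^\infty$ by iterating \eqref{eq:symmetry_def}. Hence the prediction error at time $t$ for the symmetric trajectory is $\|\widehat{\gamma^x x}_t - \gamma^x x_t\| = \|\gamma^x(\hat{x}_t - x_t)\|$. Since we restricted to sign symmetries, every $\gamma^x \in \Gamma^x$ is a diagonal $\pm 1$ matrix, hence orthogonal, so the Euclidean norm is preserved and the error equals $\|\hat{x}_t - x_t\|$.

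The argument is essentially mechanical once Lemma~\ref{lemma:symmetric_lti} is in hand; the only step that requires care is making sure the hypothesis $\gamma^v = \gamma^u = h(\gamma^x)$ and $\gamma^z = h'(\gamma^x)$ is consistently applied so that the commutation identities used in the inductive step are precisely the ones guaranteed by \eqref{eq:symmetry_lti_def}. The isometry remark for the error equality is specific to the sign-symmetry assumption adopted in this section; for a more general group action one would need $\gamma^x$ to act as an isometry with respect to the chosen norm.
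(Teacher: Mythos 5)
Your proof is correct and follows essentially the same route as the paper's: symmetry of $\Phi$ and $\Psi$ to relate the lifted initial conditions and inputs, the commutation relations \eqref{eq:symmetry_lti_def} to propagate $\tilde z_t=\gamma^z z_t$ through the dynamics, and $C\gamma^z=\gamma^x C$ for the output. You are in fact slightly more explicit than the paper, which states the symmetrized lifted flow identity directly in flow notation rather than via induction, and which leaves the equal-error claim (your isometry remark for sign matrices) implicit.
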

\begin{proof}
	Let us have a nonlinear system \eqref{eq:nlsys} with symmetries 
	\eqref{eq:symmetry_def}. 
	The symmetrical states \(x_t, \gamma x_t\) of the system are obtained as 
	\begin{equation}
	\begin{alignedat}{2}
		x_t &= S_{f}^{t}(x_0, (u_0,\dots,u_{t-1}))\\
		\gamma^x x_t &= S_{f}^{t}(\gamma^x x_0, (\gamma^u u_0,\dots,\gamma^u u_{t-1})),
	\end{alignedat}
	\end{equation}
	where 
	\(S_{f}^{t}\) denotes the flow of the system \(f\) up to time \(t\) with initial condition \(x_0\) and input sequence \(u_{0},\dots,u_{t-1}\) .
	Using the 
	same initial state and inputs, the lifted states \(z_t, \gamma^z z_t\) of the LTI predictor \((A,B)\) are 
	\begin{equation}
		\label{eq:cor_zt}
	\begin{alignedat}{2}
		z_t &=  S_{A,B}^{t}(\Phi(x_0), (\Psi(u_0),\dots,\Psi(u_{t-1}))) \\ 
		\gamma^z z_t &=  S_{A,B}^{t}(\gamma^z \Phi(x_0), (\gamma^v \Psi(u_0),\dots,\gamma^v \Psi(u_{t-1}))).
	\end{alignedat}
	\end{equation}
	The prediction of \(x\) is obtained
	as \(\hat{x} = C z\), therefore 
\begin{equation}
\begin{alignedat}{2}
	\hat{x}_t &= C \cdotp S_{A,B}^{t}(\Phi(x_0), (\Psi(u_0),\dots,\Psi(u_{t-1}))) \\ 
	\widehat{\gamma^x x_t} 
		 &= C \cdotp S_{A,B}^{t}(\Phi(\gamma^x x_0), (\Psi( \gamma^u u_0),\dots,\Psi(\gamma^u u_{t-1}))).
\end{alignedat}
\end{equation}
Using the symmetries of the lifting functions
and definition of \(\gamma^z\) \eqref{eq:gammaz_def},
we can rewrite the second equation as 
\begin{equation}
\begin{alignedat}{2}
	\widehat{\gamma^x x_t} 
		 &= C \cdotp S_{A,B}^{t}(\gamma^z \Phi( x_0), (\gamma^u \Psi(  u_0),\dots,\gamma^u \Psi( u_{t-1}))) \\ 
&= C \gamma^z z_t \\
&= \gamma^x C z_t \\ 
&= \gamma^x \hat{x}_t.
\end{alignedat}
\end{equation}
\end{proof}

\paragraph*{Symmetric lifting functions}
\label{par:symmetry_psi}
The definition of \(\Phi\) from \eqref{eq:phidef}
can be extended to respect the symmetries by simply 
defining it on a symmetric dataset
\begin{align}
	\begin{split}
		\label{eq:phidef_sym}
		\bar{z}_0^i &= \Phi(\bar{x}_0^i)\qquad \forall i \in \mathbb{Z}_{1,2^{n_{\Gamma}}\cdot N},
	\end{split}
	\end{align}
	where 
	\begin{equation}
		(\bar{x}^i_0,\bar{z}^i_0) \in 
		\{
			(\gamma^x x,\gamma^z z):
			\gamma^z = h(\gamma^x) \quad \forall \gamma^x \in \Gamma^x
		\}
	\end{equation}
	Since the input transformation \(\Psi\) is done channel-wise,
	for each \(\Psi_k\) we can simply add the cost
	\begin{align}
		\begin{split}
			\sum_{j\in \mathbb{Z}_{1,q_k}} \sum_{u^j_k \in U_k v^j_k \in V_k} || \gamma^v v^j_k - \Psi_k(\gamma^u u^j_k)||_2^2 
		\end{split}
	\end{align}
	to the regularizations \(\theta(\cdot)\) in \eqref{eq:problem_description}.

\section{Implementation details}
\label{sec:implementation}
\newcor{This section describes certain details concerning the 
problem \eqref{eq:problem_description}. 
First, we address the trajectory preparation in Section \ref{sec:trajectory_preparation}. 
Solving of the problem \eqref{eq:problem_description} and its initialization are discussed in Sections \ref{sec:solving_the_problem} and 
\ref{sec:initvals} respectively.

A summary of the whole process from data preparation 
to solving \eqref{eq:problem_description} is provided in the Section \ref{sec:summary_learning}.
}

\subsection{Trajectory preparation }
\label{sec:trajectory_preparation}
\newcor{In order to improve the prediction capabilities and restrict
overfitting, we force consecutive trajectories to 
have the same lifted values in their endpoints. This
\textit{endpoint consistency} it is enforced by the term 
$\sum_{(a,b)\in \mathcal{S}} ||z^a_{H_{\rm T}} - z^{b}_{0}||_2^2$
of \eqref{eq:problem_description}. The effect of (not) using 
it is shown in the Fig.\ref{fig:traj_endpoints}.
In our examples, we enforce the endpoint consistency and the prediction capabilities of the predictor exceed the learning horizon $H_{\rm T}$. This is shown in the Fig.\ref{fig:car_openloop} \corplane{in the numerical examples section}.

To make sure that our dataset contains consecutive 
trajectories, we generate long trajectories of 
length \(r  H_{\rm T}\) and split each into
\(r\) trajectories of length \(H_{\rm T}\). 
Hence the final state of the first trajectory is the initial state of the second one and so on (only the last trajectories will have ``free" final states \(x_{rH_{\rm T}}\)). 
The indices of consecutive trajectories are stored in the set $\mathcal{S}$, as depicted in the Fig.\ref{fig:traj}.
}

\begin{figure}[!htp]
	\centering
	\includegraphics{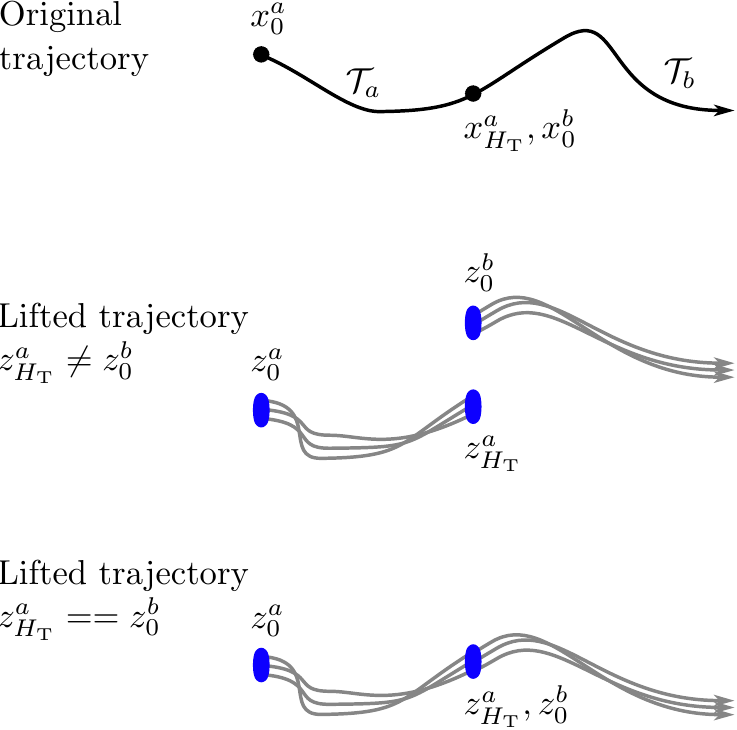}
	\caption{
\newcor{This figure demonstrates the effect of enforcing the endpoint consistency. Note that even when not enforcing it,
we still obtain a valid predictor although the prediction capabilities
will be strictly limited to the \mbox{learning horizon $H_{\rm T}$}.
}
}
		\label{fig:traj_endpoints}
\end{figure}
\begin{figure}[!ht]
	\centering
	\includegraphics{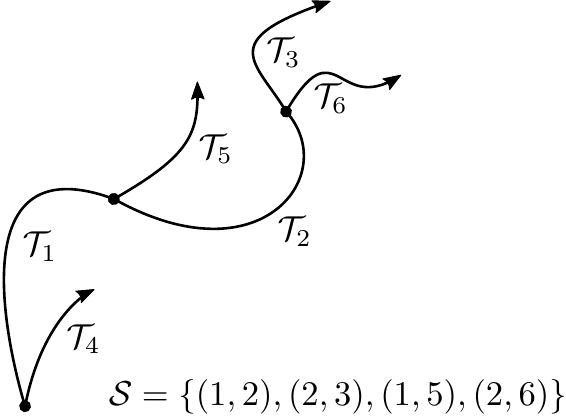}
	\caption{Example of trajectories used for learning and their interconnections. We see that one long trajectory was split into 
 $\mathcal{T}_1$, $\mathcal{T}_2$, and $\mathcal{T}_3$. The trajectories 
 $\mathcal{T}_{4,5,6}$ start from the same initial conditions as 
 $\mathcal{T}_{1,2,3}$ in this order.
 The set $\mathcal{S}$ contains the indices of the interconnected trajectories, denoting that 
 the pairs of connected trajectories are \((\mathcal{T}_1,\mathcal{T}_2)\),\((\mathcal{T}_2,\mathcal{T}_3)\), etc.}
		\label{fig:traj}
\end{figure}

Since the problem \eqref{eq:problem_description} is very flexible in terms of free variables, we need to make 
a clear distinction between controlled
and autonomous trajectories, to prevent overfitting (we do not want controlled trajectory to be approximated by autonomous response and vice versa).
For this, it is sufficient to start two trajectories 
from the same initial state, with different control inputs. This is also depicted in Fig.\ref{fig:traj}.
We state this formally in the following Lemma.
{\color{black}
\begin{lemma}
	\label{sec:dataset_control_separation}
	Assume that we have two distinct trajectories of 
	the nonlinear system \eqref{eq:nlsys}, \(\mathcal{T}_i\) and \(\mathcal{T}_j\), with \(x_0^i = x_0^j\), \(u_0^i \neq u_0^j\), and \(y_1^i \neq y_1^j\). 
	For any Koopman predictor \(\mathcal{K}\), which approximates the trajectories with zero error, 
	\COR{such that \(z_0^i = z_0^j\),
	\(z_0^{i} = \Phi(x_{0}^{i})\),
	\(z_0^{j} = \Phi(x_{0}^{j})\) , 
	\(Cz_1^{i} = y_1^{i}\),
	\(Cz_1^{j} = y_1^{j}\), }
	it holds that \(\Psi(u_0^i) \neq \Psi(u_0^j)\).

\end{lemma}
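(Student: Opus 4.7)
The plan is a short proof by contradiction using the linearity of the Koopman predictor together with the zero-error assumption. The idea is simply that if both the initial lifted states and the initial lifted inputs coincide, the predictor must produce identical one-step outputs, which would contradict $y_1^i \neq y_1^j$.

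First I would fix the predictor matrices $(A,B,C)$ and the lifting functions $(\Phi,\Psi)$, and suppose for contradiction that $\Psi(u_0^i) = \Psi(u_0^j)$, i.e., $v_0^i = v_0^j$. By the hypotheses, $z_0^i = \Phi(x_0^i) = \Phi(x_0^j) = z_0^j$ (this is already built in as an assumption of the lemma, and is automatic since $x_0^i = x_0^j$ and $\Phi$ is a function).

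Next I would apply the one-step dynamics of the Koopman predictor \eqref{eq:koop_LTI}, namely $z_1 = A z_0 + B v_0$. With $z_0^i = z_0^j$ and $v_0^i = v_0^j$ this yields $z_1^i = z_1^j$, and consequently $C z_1^i = C z_1^j$. Invoking the zero-error assumption $C z_1^i = y_1^i$ and $C z_1^j = y_1^j$, we conclude $y_1^i = y_1^j$, which contradicts the standing hypothesis $y_1^i \neq y_1^j$. Hence $\Psi(u_0^i) \neq \Psi(u_0^j)$.

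There is no real obstacle here: the statement is essentially a direct consequence of the fact that the Koopman predictor is a deterministic linear mapping of the pair $(z_0, v_0)$, so two trajectories that must differ at the next step cannot share both the lifted state and the lifted input. The only subtlety worth remarking on in the proof is why $z_0^i = z_0^j$ can be assumed without additional argument, which I would briefly justify by pointing to \eqref{eq:phidef}: $\Phi$ is defined pointwise on the data, so identical initial conditions $x_0^i = x_0^j$ force identical lifted initial conditions.
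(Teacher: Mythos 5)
Your proof is correct and is essentially the paper's own argument in contrapositive form: the paper subtracts the two one-step predictions and concludes $CB(v_0^i - v_0^j) \neq 0$ directly from $y_1^i \neq y_1^j$ and $z_0^i = z_0^j$, while you assume $v_0^i = v_0^j$ and derive the contradiction $y_1^i = y_1^j$. The substance — linearity of the one-step predictor plus the zero-error assumption — is identical, so no further comment is needed.
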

\begin{proof}
	Let us have two trajectories 
	\begin{equation}
	\begin{alignedat}{2}
	\mathcal{T}_i: &(x_0^i, y_0^i, u_0^i),(y_1^i) \\ 
	\mathcal{T}_j: &(x_0^j, y_0^j, u_0^j),(y_1^j) 
	\end{alignedat}
	\end{equation}
	such that \(i \neq j\), \(u_0^i \neq u_0^j\), and \(y_1^i \neq y_1^j\).	
	We want to show that if \(x_0^i = x_0^j\) then \(\Psi(u_0^i) \neq \Psi(u_0^j)\).
 
	Let us write the predictions of the Koopman predictor
	\begin{equation}
	\begin{alignedat}{2}
		\hat{y}_1^i &= C(A z_0^i + B v_0^i) \\
		\hat{y}_1^j &= C(A z_0^j + B v_0^j),
	\end{alignedat}
	\end{equation}
	\COR{  where 
	\(z_0^{i} = \Phi(x_{0}^{i})\),
	\(z_0^{j} = \Phi(x_{0}^{j})\),
	\(v_0^{i} = \Psi(u_{0}^{i})\),
	\(v_0^{j} = \Psi(u_{0}^{j})\)  }
	By subtracting them, we obtain
	\begin{equation}
		\hat{y}_1^i - \hat{y}_1^j = CA(z_0^i - z_0^j) + CB(v_0^i - v_0^j).
	\end{equation}
	Knowing that we have zero approximation error, 
	we can write 
	\COR{\(\hat{y}_{1}^{i} = y_1^{i}\) and \(\hat{y}_{1}^{j} = y_1^{j}\)}
	.
	Then by using the inequality
	\({y}_1^i \neq {y}_1^j \), we get 
	\begin{equation}
		CA(z_0^i - z_0^j) + CB(v_0^i - v_0^j) \neq 0.
	\end{equation}
	The inequality can be fulfilled by either 
	the initial states or the inputs not being equal.
	However, if both trajectories start from the same initial condition, we get \(z_0^i = z_0^j\) and the inequality can be only satisfied by \(v_0^i \neq v_0^j\).
\end{proof}
}




\subsection{Solving the Koopman optimization problem}
\label{sec:solving_the_problem}
\corplane{The problem \eqref{eq:problem_description} can be formulated as 
a nonlinear unconstrained optimization problem after elimination 
of the linear equality constraints. It can be solved by a variety of solvers; in our case, we use the 
\mbox{ADAM \cite{Kingma2014}} optimization algorithm.
ADAM is a first-order method for unconstrained problems,
therefore it requires gradients of the cost function.
We calculate the gradients of \eqref{eq:problem_description},
via automatic differentiation (AD) routines, specifically by
the packages Flux \cite{innes:2018},\cite{Flux.jl-2018} and Zygote
 \cite{Zygote.jl-2018} from  Julia \cite{bezanson2017julia}.
}

 
We would like to note that it seems beneficial to 
fix the values \(V_k\) during the first iterations of the 
ADAM solver. The lifted inputs will 
be more likely to retain the physical meaning of the 
original input variables. 
This is demonstrated in our last example in the Figures \ref{fig:veh_lift_1} and \ref{fig:veh_lift_2}. In this paper, we fixed the input for the first 500 iterations.

\subsection{Initialization values}
\label{sec:initvals}
The variables of \eqref{eq:problem_description}
need to be initialized. Table \ref{table:init}
lists the choices used in this paper. 
\COR{We use \(\mathcal{U}_{\alpha,\beta}\) to denote uniform distribution in the }interval \([\alpha,\beta]\).
\begin{table}[h!]
\centering
	\begin{tabular}{|c|c|} 
	\hline
	Variable & Initialization value(s) \\ 
	\hline
	\(z_0^i\) & Each element from \(\mathcal{U}_{\nicefrac{-1}{2},\nicefrac{1}{2}}\) \\ 
	\(V_k\) & Same as \(U_k\) \\
	\(A\) & \(a_{i,j} \in \mathcal{U}_{\nicefrac{-1}{2},\nicefrac{1}{2}}\) \\
	\(B\) & \(b_{i,j} \in \mathcal{U}_{\nicefrac{-1}{2},\nicefrac{1}{2}}\) \\
	\(C\) & \(c_{i,j} \in \mathcal{U}_{\nicefrac{-1}{2},\nicefrac{1}{2}}\) \\
	\hline
\end{tabular} 
\caption{Initialization values for the problem \eqref{eq:problem_description}.}
\label{table:init} 
\end{table}

It might be beneficial in some cases to initialize \(A\) such 
that its spectral radius is less than 1, which 
will make the initialized system \((A,B,C)\) stable from a control-systems point of view.

\COR{\subsection{Summary: Learning Koopman predictor}
\label{sec:summary_learning}
The algorithm \ref{alg:koopalg} summarizes the procedure for learning the Koopman predictor.

\begin{algorithm}
	\caption{Find the Koopman predictor (A,B,C,\(\Phi,\Psi\))}
	\label{alg:koopalg}
	\begin{algorithmic}[1]
	\Require $f,g ,X_0,H_{\rm T},q_k,N$
		\State Identify symmetries of the system \(f,g\) according to \ref{sec:symmetry} and create the sparsity patterns for \(A,B,C\).
		\State Choose the number of quantization levels \(q_k\) and create the quantized channels \(U_k\).
		\State Generate \(N\) trajectories \(\mathcal{T}_i\) of length \(H_{\rm T}\) according to \ref{sec:trajectory_preparation}
		\State Initialize the matrices \(A,B,C\), and the elements of \(Z_0\) and \(V_k\) according to \ref{sec:initvals}.
		\State Solve \eqref{eq:problem_description}, optionally fix the values \(V_k\) as mentioned in \ref{sec:solving_the_problem}.
		\Ensure $A,B,C,Z_0,V_k$
	\end{algorithmic}
\end{algorithm}}

\section{Koopman operator in control}
\label{sec:control}
In this section, we shall demonstrate the advantage of using a Koopman operator in control applications. More specifically, we will consider a Model Predictive Control (MPC) \COR{\cite{Mayne2000}}.
 For a general nonlinear system \eqref{eq:nlsys}, the MPC can be formulated as 
\begin{equation}
	\label{eq:mpc_nl}
	\begin{aligned}
	J^{\star} = \min \quad & \sum_{t=1}^{H} J_{\rm n}(x_t) + J_{\rm c}(x_t) \\
	\textrm{s.t.} \quad & x_{t+1} = f(x_t,u_t)\\
	& x_t \in X \\
	&  u_t \in U,
	\end{aligned}
\end{equation}
where \( J_{\rm c}\) and \(J_{\rm n}\) are convex and non-convex parts of the cost function in this order. 
This problem is non-convex due to the function \(J_{\rm n}\) and the generally nonlinear constraint \( x_{t+1} = f(x_t,u_t)\); this makes
the problem difficult to solve in general. We can, however, use  
the Koopman methodology to reformulate the problem in a convex fashion.
We can eliminate the non-convex cost \(J_{\rm n}(x_t)\) by setting it as an additional output \( \chi_t = J_{\rm n}(x_t)\) \newmk{and use} the Koopman predictor of the form 
\begin{equation}
\begin{alignedat}{2}
 	z_{t+1} &= Az_t + B v_t \\
	\begin{bmatrix}
		\hat{\chi}_{t} \\ \hat{x}_t
	\end{bmatrix} &= C z_t\\
	z_0 &= \Phi(x_0),
\end{alignedat}
\end{equation}
where \(\hat{\chi}_t\) is an approximation of the non-convex cost \(\chi_t = J_{\rm n}(x_t)\).
We can now rewrite \eqref{eq:mpc_nl} as
\begin{equation}
	\label{eq:mpc2}
	\begin{aligned}
	\hat{J}^{\star} = \min \quad & \sum_{t=1}^{H} \hat{\chi}_t + J_{\rm c}(\hat{x}_t) \\
	\textrm{s.t.} \quad & z_{t+1} = Az_t + B v_t\\
	& \begin{bmatrix}
		\hat{\chi}_{t} \\ \hat{x}_t
	\end{bmatrix} = C z_t\\
	& z_0 = \Phi(x_0) \\ 
	& \hat{x}_t \in X \\
	& v_t \in V,
	\end{aligned}
\end{equation}
where \(X\) is assumed to be convex and \(V\) is a box-constraint by construction 
(due to the channel-wise lifting). Under these assumptions, the problem \eqref{eq:mpc2} is convex.

In a wide range of applications, \COR{the convex part of the cost is quadratic and the constraints are upper and lower bounds on some of the variables}. This allows the user to 
formulate the problem~(\eqref{eq:mpc2}) as a convex Quadratic Program (QP) -- a
well-studied class of convex optimization problems with many solvers tailored to solving them efficiently such as 
 ProxSuite \cite{bambade:hal-03683733}, OSQP \cite{osqp}, and COSMO \cite{Garstka_2021}.
 \subsection{Koopman MPC}
 Let us exploit the benefits of the QP formulation and formulate our problem concretely, in a tracking form (minimizing the deviation of \(y_t\) from known \(y_{\rm ref}\) ) consistent with our own implementation
used later in the Section \ref{sec:numerical_examples}. 
We shall refer to this optimization problem as Koopman MPC (KMPC):

\begin{equation}
	\label{eq:MPC2QP}
	\begin{alignedat}{2}
	\min_{\Delta v_t, z_t} \quad & \sum_{t=1}^{H}
	||y_{\rm ref} - C z_t||^2_{Q} + ||v_t||^2_{R} &&+||\Delta v_t||^2_{R_{\rm d}} \\ 
	\textrm{s.t.} \quad & z_{t+1} = A z_t + B v_t && t = 1\ldots H-1\\
	& v_t = \Delta v_{t} + v_{t-1} && t = 0\ldots H\\
	& y_{\rm low} \leq y_t \leq y_{\rm up} && t = 1\ldots H\\
	& v_{\rm low} \leq v_t \leq v_{\rm up} && t = 1\ldots H\\
	& \Delta v_{\rm low} \leq \Delta v_t \leq \Delta v_{\rm up} && t = 1\ldots H\\
	& z_0 = \Phi(x_0) \\ 
	& v_0 = \Psi( u_{\rm prev}),
	\end{alignedat}
\end{equation}
where \(y_t = \begin{bmatrix}
	\hat{\chi}_t \\ \hat{x}_t
\end{bmatrix}\), \(Q \succcurlyeq 0\), \(R \succ 0\), and \(R_{\rm d}\succ 0\).
As mentioned before, this formulation solves the tracking problem where \(y_{\rm ref}\) is an external parameter, and 
 \(\Delta v_t\) is the optimization variable (instead of \(v_t\)). 
\COR{The output, input, and input rate constraints are the pairs \((y_{\rm low},y_{\rm up})\), \((v_{\rm low},v_{\rm up})\), and \((\Delta v_{\rm low},\Delta v_{\rm up})\) in this order. }

The optimal 
solution is recovered as \(u_{t}^{\star} = \hat{\Psi}^{-1}(v^{\star}_t) \),
\COR{where \(\hat{\Psi}\) is linearly interpolated version of \(\Psi\)
which is discussed in detail the following Section.}
The problem is solved repeatedly at each time step, always using only 
the first control input \(u_{1}^{\star}\) and then recalculating the 
solution from a new initial state.
 This approach provides closed-loop control and 
 can be seen in Fig. \ref{fig:mpc_scheme}.

 \begin{figure}[!ht]
	\centering
		\includegraphics{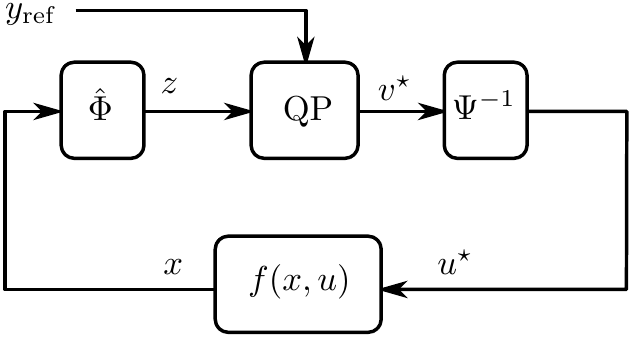}
	\caption{General MPC scheme using the Koopman operator as 
	a control design model.}
		\label{fig:mpc_scheme}
\end{figure}
 
\corr
 {\subsection{Control-related considerations}}
 For use in control, we need to expand the domain of the lifting
 function \(\Phi\) \eqref{eq:phidef} to \(X\), instead of \(X_0\).
 Another matter to consider is the invertibility of \(\Psi\) \COR{\eqref{eq:psidef}}, 
 since it is not guaranteed from \eqref{eq:problem_description}.

 Both of these matters are addressed in the following sections
 \ref{sec:interpolation_phi} and \ref{sec:invertibility_psi}.

 Regarding the connection of the Koopman predictor and MPC, 
we can expect the lifting functions to have a non-zero lifting error since
we are working only with an approximation of the Koopman operator.
The lifting error can be calculated and its knowledge exploited as a part of the MPC algorithm. This option is addressed in \ref{sec:lifting_error}.

The last consideration is setting of the lifted input rate bounds \(\Delta v_{\rm up/low}\). These bounds do not naturally arise by finding the lifting functions (unlike \(v_{\rm up/low}\)) and need to be set manually. 
We address this further in \ref{sec:inputrate}.

 Lastly, we note that the problem \eqref{eq:MPC2QP} has 
 the lifted state vectors \(z_t\) as variables. 
 This greatly increases the total number of optimization variables
 since the dimension \(n_z\) \corplane{may be large} (recall that 
 we are approximating infinite-dimensional operator).
 In the appendix, we show that the MPC can be formulated in a so-called condensed formulation, which has only \(\Delta v_t\) as variables,
 reducing the computational burden. 

 \subsubsection{Lifting error}
 \label{sec:lifting_error}
When the lifting via \(\hat{\Phi}\) is not exact, we can use the knowledge of \(\hat{\Phi}\) to calculate the lifting error at \(x_{\rm init}\)
and set it as output disturbance to the MPC to increase 
the precision of the calculation for the first timestep.
The augmented output equation \corplane{then reads}
\begin{equation}
	y_t = C z_t + d_t,
\end{equation}
where \(d\) is the initial-state lifting error
\begin{equation}
	\label{eq:disturbance_mpc}
	d_t = g(x_{\rm init}) - C\hat{\Phi}(x_{\rm init}) \COR{\quad \forall t = 1,\dots,H.}
\end{equation}
This can be easily implemented by augmenting the state-space model as 
\begin{equation}
\begin{alignedat}{2}
\bar{A} = \begin{bmatrix}
	A & \\
	& \zeta I
\end{bmatrix}, \bar{B} = \begin{bmatrix}
	B \\ 0
\end{bmatrix},
C = \begin{bmatrix}
	C & I
\end{bmatrix},
\end{alignedat}
\end{equation}
where \(|\zeta| \leq 1 \) is the decay rate of \(d\).
Constant disturbance model is achieved by setting \(\zeta = 1\).

The error could be also estimated for the whole prediction horizon iteratively, by solving the QP multiple times and evaluating \eqref{eq:disturbance_mpc} for all the states \(x_t\).


 \subsubsection{Input rate bounds}
 \label{sec:inputrate}
The bounds on \(\Delta v\) are not trivial to choose because
the lifted-space bounds on \(v\) will not correspond to the real bounds on \(u\) 
\COR{because of the nonlinearity of $\Psi$}, as seen in Fig.\ref{fig:UV_graf}.
Possible solutions are
\begin{enumerate}
	\item use the bounds with soft constraints to make them flexible
	\item use ideas from nonlinear MPC and make an iterative scheme (i.e. solve the QP \corplane{multiple times} while iteratively adjusting the bounds.)
	\item use linearization of \(\Psi\) to set the bound 
	\item  set the bounds on \(\Delta v\) conservatively, so that the worst case of \(\Delta u\) is guaranteed to be within its bounds.
\end{enumerate}

\begin{figure}[!ht]
	\centering
	\includegraphics{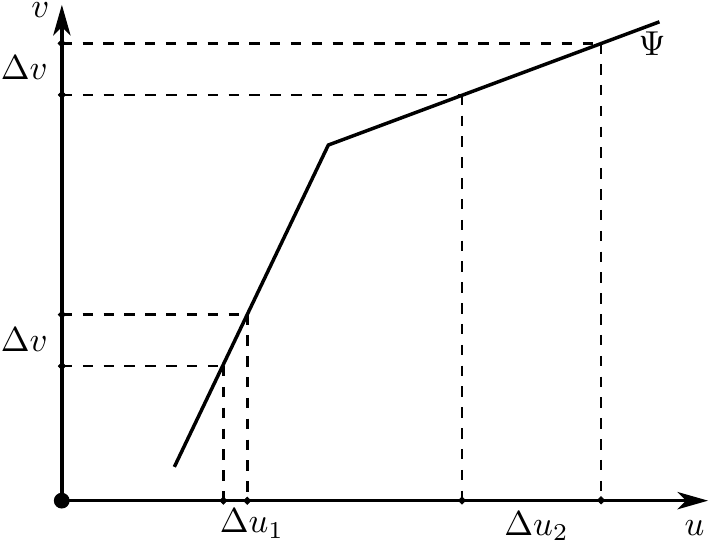}
	\caption{Example of a function \(\Psi\), which would pose challenges
	 with identifying the desired bounds on \(\Delta v\).
	 We see that constant \(\Delta v\)
	 can correspond to both \(\Delta u_1\) and \(\Delta u_2\) depending 
	 on the current value of the input.
	}
		\label{fig:UV_graf}
\end{figure}



\subsubsection{Interpolation of \(\Phi\)} 
\label{sec:interpolation_phi}
\COR{As a result of the optimization process, we will obtain 
the samples of the function \(\Phi\) in the form of pairs \((x_0^i,z_0^i)\),}
instead of the function itself. 
One needs to approximate \(\hat{\Phi}\) by interpolation as
\begin{equation}
	\hat{\Phi}(x,p) = h(X_0,Z_0,x,p),
\end{equation}
where \(h\) is \corplane{an} interpolation method with parameters \(p\), e.g.,  the K-Nearest Neighbours (k-NN).
The question is how to choose the parameters \(p\)?
One way would be to simply evaluate the lifting error  across all datapoints and select the best one as 
\begin{equation}
	\label{eq:static_knn}
	 p^{\star} = \argmin_{p \in P} \sum_{x \in X_0} ||g(x) - C\hat{\Phi}(x,p)||^2_Q,
\end{equation}
where \(P\) is the parameter space.
\COR{We use the \(|| \cdot ||_Q\) norm
 to make the weighting consistent with the KMPC cost function in \eqref{eq:MPC2QP}.}
Another possibility is to adapt the interpolation scheme dynamically to the current initial point \(x_{\rm init}\)
of the MPC as
\begin{equation}
	\label{eq:adaptive_knn}
	p^{\star} = \argmin_{p \in P} ||g(x_{\rm init}) - C\hat{\Phi}(x_{\rm init},p)||^2_Q.
\end{equation}
Doing this would ensure the precision of the first few steps of the prediction, as well as the precision of the first control input, which is used for the closed-loop control.
\COR{The tradeoff is that every iterate of the closed-loop would involve
 searching for the best interpolation parameters either via 
solving an optimization problem, or simply by evaluating the lifting function \(|P|\) times (if the search space \(P\) is finite, \corplane{such as for k-NN}).}


\subsubsection{Invertibility of \(\Psi\)}
\label{sec:invertibility_psi}
The function $\Psi$ might be required to be invertible if we intent to 
use its continuous interpolation in the MPC.

In order to ensure this, we can either manually limit the domain
of each channel to its invertible parts, which boils down to manually limiting the domain of \(n_u\) scalar functions of one variable.

Another option is enforcing monotonicity of individual 
\(\Psi_k\) 
\corplane{by adding the following regularization as the additional
cost term \(\theta(\cdotp)\) in \eqref{eq:problem_description}: 
\begin{equation}
	\label{eq:monotonicity}
	\left\lVert  \left|  v_k^{q_k} - v_k^1  \right| - \sum_{i=1}^{q_k - 1} | v_k^{i+1} - v_k^i |  \right\rVert_2^2,
\end{equation}
where \(v_k\) are samples of \(\Psi_k\). The cost \eqref{eq:monotonicity} is explained in the following Lemma.}
\begin{lemma}
\corplane{A sequence of \(q_k\) consecutive samples \([v_k^1,\dots,v_k^{q_k}]\) is monotonous if and only if }
\begin{equation}
	\label{eq:monotonicitylemma}
	\left|  v_k^{q_k} - v_k^1  \right| = \sum_{i=1}^{q_k - 1} | v_k^{i+1} - v_k^i |.
\end{equation}
\end{lemma}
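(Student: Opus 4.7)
The plan is to recognize that this is exactly the equality case of the triangle inequality applied to the telescoping sum of consecutive differences. First I would write the telescoping identity
\[
v_k^{q_k} - v_k^{1} \;=\; \sum_{i=1}^{q_k-1} \bigl(v_k^{i+1} - v_k^{i}\bigr),
\]
and take absolute values to obtain, by the triangle inequality,
\[
\bigl|v_k^{q_k} - v_k^{1}\bigr| \;\le\; \sum_{i=1}^{q_k-1} \bigl|v_k^{i+1} - v_k^{i}\bigr|,
\]
which already shows the nontrivial direction of \eqref{eq:monotonicitylemma} is an equality in a standard inequality, and hence lets us use the equality case as the main tool.

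Next I would handle the forward direction (monotonicity $\Rightarrow$ equality). If the sequence is monotonic, every increment $v_k^{i+1}-v_k^{i}$ has a common sign (non-negative for non-decreasing, non-positive for non-increasing). Consequently $\sum_i |v_k^{i+1}-v_k^{i}| = \bigl|\sum_i (v_k^{i+1}-v_k^{i})\bigr|$ by pulling out the common sign, and the right-hand side equals $|v_k^{q_k}-v_k^{1}|$ by telescoping. This yields \eqref{eq:monotonicitylemma}.

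For the reverse direction (equality $\Rightarrow$ monotonicity), I would invoke the standard equality condition for the triangle inequality: $|a_1+\dots+a_m| = |a_1|+\dots+|a_m|$ holds if and only if all nonzero $a_i$ share a common sign. Applying this with $a_i = v_k^{i+1}-v_k^{i}$ gives that the nonzero increments all have a common sign, hence the sequence is either non-decreasing or non-increasing, i.e., monotonic.

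The proof is essentially bookkeeping on signs; the only subtlety is that zero increments (constant stretches of the sequence) are compatible with either monotone direction, so there is no real obstacle. I would briefly justify the triangle-inequality equality condition itself (e.g., by squaring or by a direct sign argument) so that the lemma is self-contained.
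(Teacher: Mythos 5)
Your proof is correct and follows essentially the same route as the paper: both rest on the telescoping identity $v_k^{q_k}-v_k^1=\sum_i(v_k^{i+1}-v_k^i)$ and a sign analysis of the increments, with the reverse direction being the equality case of the triangle inequality. Your version is if anything cleaner, since you invoke the standard equality condition directly where the paper verifies it by hand for a single sign change and asserts the general case.
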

\begin{proof}
 The final element of the sequence
 can be written as 
 \begin{equation}
	v_k^{q_k} = v_k^1 + \sum_{i=1}^{q_k-1} \epsilon_i,
 \end{equation}
 where \(\epsilon_i = v_k^{i+1} - v_k^i\).
We reorganize the terms
\begin{equation}
	v_k^{q_k} - v_k^1 = \sum_{i=1}^{q_k-1} \epsilon_i
 \end{equation}
 and put both sides of the equation in absolute value
 \begin{equation}
	\label{eq:proof_absfun}
	|v_k^{q_k} - v_k^1| = \left|\sum_{i=1}^{q_k-1} \epsilon_i \right|.
 \end{equation}
If the function is monotonous, all the \(\epsilon_i\) 
have the same sign and hence
\begin{equation}
	\label{eq:proof_monoabs}
	\left|\sum_{i=1}^{q_k-1} \epsilon_i \right| = \sum_{i=1}^{q_k-1} |\epsilon_i|.
\end{equation}
Finally, by assuming the monotonicity of \(\Psi_k\), we can put \eqref{eq:proof_absfun} and \eqref{eq:proof_monoabs} together and we obtain
\begin{equation}
	\label{eq:final_equality}
	|v_k^{q_k} - v_k^1| = \sum_{i=1}^{q_k-1} |\epsilon_i| = \sum_{i=1}^{q_k - 1} | v_k^{i+1} - v_k^i | .
 \end{equation}
 Therefore, if the sequence \(v^i_k\) is monotonous, the equality \eqref{eq:monotonicitylemma} must hold.\\

\COR{Let us now prove the other direction. We claim that 
if \eqref{eq:monotonicitylemma} holds, then the function
is monotonous.
In the simple cases where the terms inside the 
absolute values are either all non-negative or non-positive,
it is trivial to see that the function will be non-decreasing or non-increasing respectively.

The interesting case is where the signs are different.
Let us assume, without loss of generality, that
\(v^2 - v^1 \leq 0\), and all the other terms are nonnegative.
If we rewrite \eqref{eq:final_equality} without the absolute values,
we obtain 
\begin{equation}
	(v_k^1 - v_k^2) + (v_k^3 - v_k^2) + (v_k^4 - v_k^3) + \dots + (v_k^{q_k} - v_k^{q_k - 1}) =
	v_k^{q_k} - v^1.
\end{equation}
Is is clear that
\begin{equation}
	v_k^1 - v_k^2 = 0,
\end{equation}
therefore \(\Psi_k\) will be monotonically non-decreasing.
The same approach can be used for multiple sign changes, which concludes the proof.}
\end{proof}

\COR{\section{Summary: Koopman MPC}
\label{sec:summary_mpc}
The algorithm \ref{alg:mpcalg} summarizes the procedure for designing the KMPC.
\begin{algorithm}[!htbp]
	\caption{Create the Koopman MPC}
	\label{alg:mpcalg}
	\begin{algorithmic}[1]
	\Require {$A,B,C,Z_0,V_k,H,Q,R,R_{\rm d}$}
		\State Create interpolated lifting function \(\hat{\Phi}\)
		according to \ref{sec:interpolation_phi}.
		\State Decide on the strategy for inverting \(\Psi\) as 
		in \ref{sec:invertibility_psi}, optionally include the cost \eqref{eq:monotonicity} into the term \(\theta(\cdotp)\) in \eqref{eq:problem_description}.
		\State Decide on the strategy for dealing with the input rate bounds, if applicable, according to \ref{sec:inputrate}.
		\State Formulate the KMPC, either in the sparse \eqref{eq:MPC2QP} or the dense \eqref{eq:MPCasQP} formulation.
		\Ensure function $\textrm{KMPC}(y_{\rm ref},x_0,u_{\rm prev}) \rightarrow v^{\star}$
	\end{algorithmic}
\end{algorithm}

The algorithm \ref{alg:mpcalgclosed} shows the usage of KMPC in closed loop. The \corplane{step 6} is equivalent to applying the control inputs to the real system.

\begin{algorithm}[!htbp]
	\caption{Closed loop KMPC}
	\label{alg:mpcalgclosed}
	\begin{algorithmic}[1]
        \Require $y_{\rm ref},\textrm{KMPC}$
	\State Initialization: \(u_{\rm prev} \gets 0\), \(x_0 = x_{\rm init}\)
		\While{true}
			\State \(z_0 \gets \hat{\Phi}(x_0)\).
			\State \(v_0^{\star} \gets \textrm{KMPC}(y_{\rm ref},x_0,u_{\rm prev})\) [Problem~\eqref{eq:MPC2QP}]
			\State \(u_{0}^{\star} \gets \hat{\Psi}^{-1}(v^{\star}_0) \)
			\State \(x_{0} \gets f(x_0,u_{0}^{\star})\)
			\State \(u_{\rm prev} \gets u_{0}^{\star}\)
			\State Wait for the next timestep.
		\EndWhile
	\end{algorithmic}
\end{algorithm}

}

\section{Numerical examples}
\label{sec:numerical_examples}
In this section, we shall demonstrate the following properties of our approach:
\begin{enumerate}
	\item discovering discontinuous lifting functions
	\item finding exact lift even for systems with multiple equilibria
	\item \newcor{controlling} systems with multiple equilibria
	\item controlling systems with nonlinear input functions
	\item control of \newcor{realistic}, highly nonlinear systems which are difficult to 
	control with standard methods of control.
\end{enumerate}


The first two properties are demonstrated in the first example,
on a system with known analytical solution.
The discontinuous lifting is of particular interested,
since a lot of current methods (such as EDMD and its derivatives)
 require the 
prior knowledge (or guess) of the lifting function(s) \(\Phi\).
In order to find the Koopman operator using these methods, we would 
need to know whether its lifting functions are discontinuous
(along with the particular type \corplane{and location} of discontinuity).
This is not required by our method.

The second example shows the control of a system with multiple equilibria, using the KMPC, we compare our method to EDMD \cite{Korda_koop} and Optimal eigenfunction \cite{Korda_opt_g}.

The third example shows control of a system that cannot be controlled
by other Koopman methods which do not have nonlinear input transformation.

The last example is of a more practical nature; 
\newcor{it shows that our approach can be used to synthesize a single controller that can 
drive the vehicle in normal conditions as well as to 
stabilize it
from an unstable state.
Vehicles are highly nonlinear systems, especially when the wheels lose grip
with the road. 
Therefore such maneuvers are rather challenging, since they 
require exploitation of the nonlinear dynamics of the vehicle.
We compare both maneuvers againts MPC based on local linearization (LMPC)
and a Nonlinear MPC (NMPC).}

\subsection{Discontinuous lifting with multiple equilibria}
This example demonstrates that our algorithm can \newcor{find} 
discontinuous lifting functions and systems with multiple equilibria.
We will use the system from \cite{Bakker2019}, 
to which the analytical solution is known.
We empirically show that our algorithm converges to the analytical solution from any initial \newcor{condition of the ADAM solver}.

Consider the nonlinear system 
\begin{align}
\label{eq:mult_eq_def}
\begin{split}
\dot{x} &= f(x)\\
f(x) &= \begin{cases}
	-(x-1) & \text{if } x > 0\\
	0 & \text{if } x = 0\\
	-(x+1) & \text{if } x < 0
\end{cases}
\end{split}
\end{align}
\newcor{The corresponding Koopman operator derived in \cite{Bakker2019} 
is of the form 
\begin{equation}
    \dot{z} = Az, y = Cz,
\end{equation}}
where
\begin{align}
\begin{split}
	\label{eq:koop_iso}
A &= \begin{bmatrix}
	-1 & 1\\0 &0
\end{bmatrix} \\ 
C &= \begin{bmatrix}
	1 & 1
\end{bmatrix}.
\end{split}
\end{align}
The output \newcor{\(y = z_1+z_2\)} is equal to the nonlinear state \(x\).
The eigenvalues and eigenfunctions of the operator \eqref{eq:koop_iso}
are
\begin{align}
\begin{split}
\lambda_1 = -1, &\quad \Phi_1 = x - \text{sign}(x), \\ 
\lambda_2 = 0, &\quad \Phi_2 = \text{sign}(x).
\end{split}
\end{align}

\begin{figure}[!htpb]
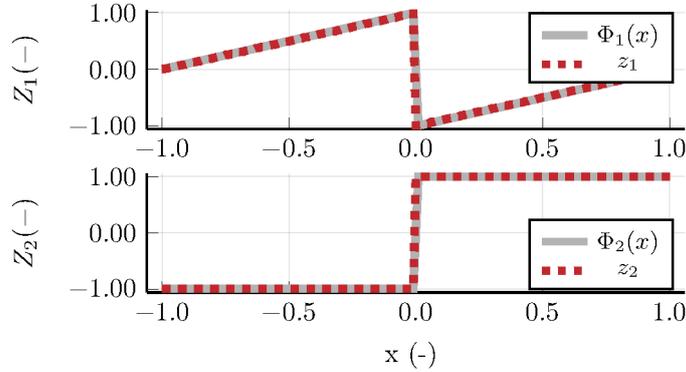

	\centering
	\begin{flushright}
		\inputnamedtex{1d_isopoints}
		\end{flushright}
	\caption{Comparison of analytical and learned lifted function of the one-dimensional 
	system with multiple isolated points. We use \(z_1\) and \(z_2\) to denote the 
	first and second coordinate of the lifted space.}
		\label{fig:1d_iso}
\end{figure}
\newcor{
In order to learn the Koopman predictor, 
we have generated 300 trajectories of the system \eqref{eq:mult_eq_def} of length \(H_{\rm T} = 20\) which were sampled with
\(T_{\rm s} = 0.1s\). 
Using the approach \ref{alg:koopalg}, we obtained a discrete-time Koopman predictor $(\hat{A}_{\rm d},\hat{C}_{\rm d})$, which was transformed into a continuous system by \((\hat{A},\hat{C}) = (\frac{\log(\hat{A}_{\rm d})}{T_{\rm s}},\hat{C_{\rm d}})\)}.
To compare our result with the analytical solution \eqref{eq:koop_iso}, we transformed both the analytical system \((A,C)\)
and the learned approximation \((\hat{A},\hat{C})\) into observer canonical form 
\begin{alignat*}{3}
A_{\rm o} &= \begin{bmatrix}
	0 & 1 \\ 
	0 & -1
\end{bmatrix}, & C_{\rm o} = \begin{bmatrix}
	1 & 0
\end{bmatrix}, \\ 
\hat{A}_{\rm o} &= \begin{bmatrix}
	-0.0003 &  1.0508 \\ 
	0.005 &  -0.9989
\end{bmatrix}, & \hat{C}_{\rm o} = \begin{bmatrix}
	1 & 0
\end{bmatrix}.
\end{alignat*}
We can see that the approximated system is numerically close 
to the analytical solution.
The continuous eigenvalues of learned Koopman 
system were \( \hat{\lambda}_{1,2} = \begin{bmatrix}
	-1.004& 0.005
\end{bmatrix}\). The estimated eigenfunctions are compared to the 
real ones in Fig.\ref{fig:1d_iso}. 

\vclast{
We compare the open-loop prediction capabilities of our method with EDMD
of different orders $n_z$ in Fig.\ref{fig:1d_edmd}. The basis functions for the EDMD
were thin plate spline radial basis functions
\begin{equation}
\label{eq:rbf}
    \Phi_{\rm rbf}(x) = || x - x_{\rm c} ||^2 \textrm{log}(||x - x_{\rm c} ||),
\end{equation}
with centers $x_{\rm c}$ selected randomly from the interval $[-1,1]$.
We can see that the proposed method provides more precision than EDMD while 
having only two lifted states.
\begin{figure}[!htb]
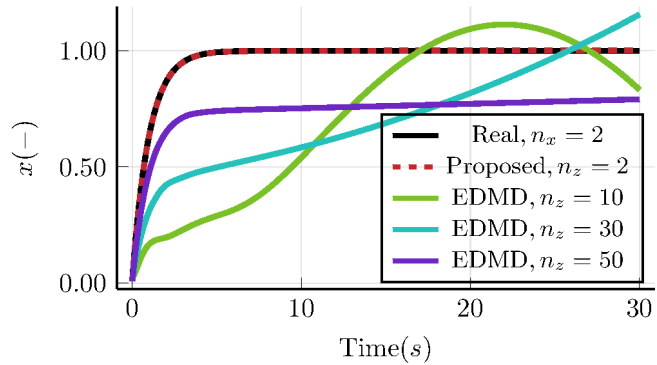

	\centering
		\begin{flushright}
		\inputnamedtex{1d_edmd}
		\end{flushright}
		\caption{Open-loop predictions of our method and EDMD of different orders.
  The initial state of the trajectories was $x_0 = 0.01$, close to the unstable equilibrium
  at $x_{\rm e} = 0$.
  The proposed predictor has the lowest order and the highest precision due to 
  the precise identification of the discontinuous lifting functions.}
		\label{fig:1d_edmd}
\end{figure}
}

To test convergence of our algorithm, we learned 100 
Koopman predictors with different initial conditions \COR{of the ADAM solver} generated 
according to \ref{table:init}. 
The eigenvalues converged to the analytical result in all cases.
The results can be seen in Fig.\ref{fig:1d_convergence}.

\begin{figure}[!htb]
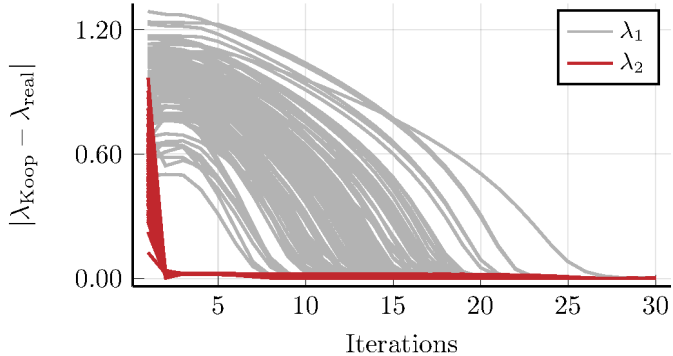

	\centering
		\begin{flushright}
		\inputnamedtex{1d_convergence}
		\end{flushright}
		\caption{Evolution of the error of the \newcor{continuous} Koopman eigenvalues during the solver iterations for 100 different initial 
		conditions. 
		Since the optimization is always done in discrete time,
		the errors were also calculated for the discrete eigenvalues.}
		\label{fig:1d_convergence}
\end{figure}

We can see that our approach was able to discover discontinuous 
eigenfunctions and system with multiple equilibria  without any prior information. 

\subsection{Control of a system with multiple equilibria}
In this example, we deal with the damped Duffing oscillator with forcing. We will show that we can steer the system into all
of its equilibria, \COR{including the unstable one}, with the KMPC.
This maneuver is also possible with other methods 
for approximating the Koopman predictor, we shall 
therefore provide a comparison with them, namely EDMD and 
the Optimal eigenfunction approach from \cite{Korda_opt_g}.

The continuous dynamics are
\begin{align}
\begin{split}
	\label{eq:duffing}
	\dot{x}_1 &= x_2, \\ 
	\dot{x}_2 &= -0.5x_2 - x_1(4x_1^2 -1) + 0.5u,
\end{split}
\end{align}
where \(x \in [-1,1]^2\) and \(u \in [-1,1]\).
The system has 3 equilibria \(x_{\textrm{e}_1} = [-0.5,0]\), \(x_{\textrm{e}_2} = [0,0]\),
and \(x_{\textrm{e}_3} = [0.5,0]\).

For training, we discretize the system \eqref{eq:duffing} using Runge-Kutta 4 with sampling time  \(T_{\rm s} = 0.02 \rm s\).
 We used \(N=100\) trajectories, each containing 1000 samples.
 The trajectories were split into shorter ones with length \(H_{\rm T} = 20\). 
The lifted space has size \(n_z = 30\) and the input was 
quantized equidistantly with \newcor{11 quantization levels.}
All of the three methods used the same data, only the Optimal eigenfunctions
were learned directly on the long trajectories (since it is benefitial for the method), and the EDMD was learned on pairs of consecutive states.
The lifting functions for the EDMD were thin plate splines used in \cite{Korda_koop}. 
The dataset had the control-separating properties discussed in 
\ref{sec:trajectory_preparation} and proven in the Lemma \ref{sec:dataset_control_separation}, the EDMD also profited from this as it 
was able to control the system as well as the other two considered methods.

The MPC parameters were \(Q = \rm{Diag}(15,0.1)\), \(R = 1\), and \(H = 60\) (1.2s), \(y_{\rm [up,low]} = \pm[1,1]\).
The original input bounds were \(\Delta u_{\rm [up,low]} = \pm 46\) and \(u_{\rm [up,low]} = \pm 1\).
\COR{The lifted input bounds were \(\Delta v_{\rm [up,low]} = \pm 5\) and  \(v_{\rm [up,low]} = [ 0.108, -0.11]\).}
Note that the bounds for the lifted inputs depend on the
particular initialization of the variables \eqref{eq:problem_description}. 
In our case, the function \(\Psi\) was \emph{linear}
after the optimization \eqref{eq:problem_description}, approximately 
\(\Psi(u) \dot{=} 0.1u\).

The Figure \ref{fig:duffing_xu} shows the three methods on a maneuver 
which requires to stabilize the system at all three equilibra and the point \([0.25,0]^\top\), which 
is not an equilibrium. We can see that the EDMD has issues with stabilizing the system at the non-equlibrium point.
The Figure \ref{fig:duffing_short} shows that the 
methods are have comparable results apart from the non-equilibrium point.

The Figure \ref{fig:duffing_long} shows the same maneuver with a longer 
prediction horizon which usually leads
 to better closed-loop performance.
 We see that the EDMD and optimal eigenfunctions
introduce oscillations into the system, making the performance worse,
while the proposed method improved its behaviour by smoothening out 
its overshoots (most visibly around 4s and 11s).

\begin{figure}[!htb]
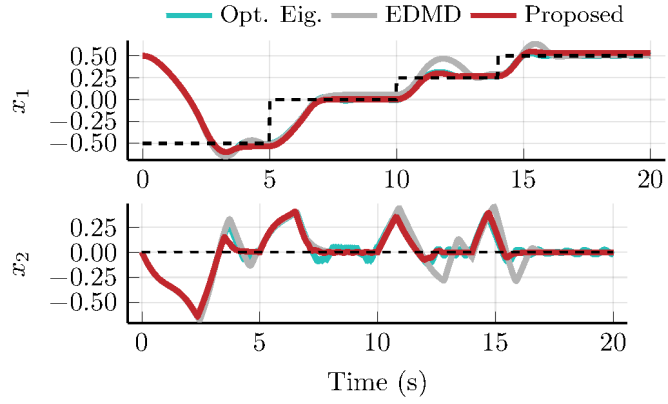
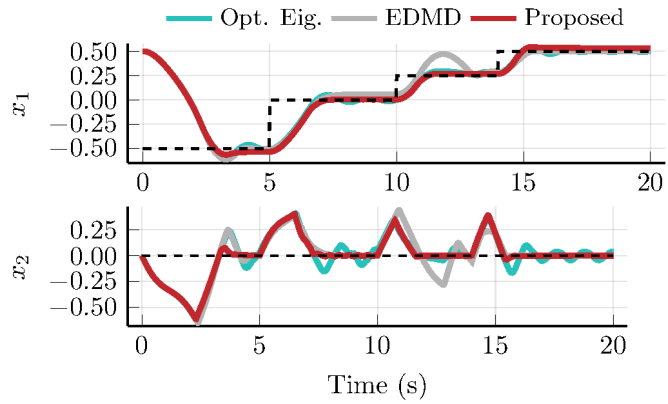

	\centering
	\begin{flushright}
	\subfloat[Results for short horizon \(H = 20\).
	All methods are able to control the system, although EDMD has 
	large overshoots and oscillations.]
	{
	\inputnamedtex{duffing_shortH}
		\label{fig:duffing_short}}
	\end{flushright}
	\centering
	\begin{flushright}
	\subfloat[Comparison with longer horizon \(H=60\).
	Both EDMD and Op. Eig. oscillate around the equilibria.]{
	\inputnamedtex{duffing_longH}
		\label{fig:duffing_long}}
	\end{flushright}
		\caption{Comparison of the proposed method, EDMD, and the optimal eigenfunction method. The maneuver visited all three equilibria 
		as well as the state \([0.25,0]^\top\), which is not an equilibrium.
		The EDMD was the least stable, whereas the optimal eigenfunctions 
		were destabilized only with a large prediction horizon.
		The proposed method provided the same results for all }
		\label{fig:duffing_xu}
\end{figure}

\subsection{Control with nonlinear input function}
In this example, we use the Duffing oscillator with changed control term.
The continuous dynamics are
\begin{align}
\begin{split}
	\label{eq:duffing_usquared}
	\dot{x}_1 &= x_2, \\ 
	\dot{x}_2 &= -0.5x_2 - x_1(4x_1^2 -1) - 0.5u^2,
\end{split}
\end{align}
where the square in the control input is the only change from the previous example.
The dataset parameters and the MPC setup are exactly the same as in 
the previous case. The point of this example is to show that 
predictors that use the original input \(u\) cannot approximate and therefore 
control certain class of systems, such as \eqref{eq:duffing_usquared}.

We chose EDMD as a representant of the predictors with original control 
input and learned it alongside of our method on the system \eqref{eq:duffing_usquared}.
Our method resulted in a predictor with the input lifting function 
shown in Fig. \ref{fig:duffing_usquared_input}, which is simply a scaled (and shifted) 
\(- 0.5u^2\).
We attempted to perform a maneuver that would bring the Duffing oscillator 
from  one stable equilibrium to the other, i.e. from \([0.5,0]^\top\) to \([-0.5,0]^\top\). The results can be seen in Fig.\ref{fig:duffing_usquared}.
We see that EDMD was not able to leave the equilibrium.
The Figure \ref{fig:duffing_usquared_2d} shows the maneuver in 
a state space plot. We see that our controller exploited the 
dynamics and did not apply any control when the system was going 
to the equilibrium on its own (at acceptable rate, we see that the controller
sped up the converge near the equilibrium).


\begin{figure}[!ht]
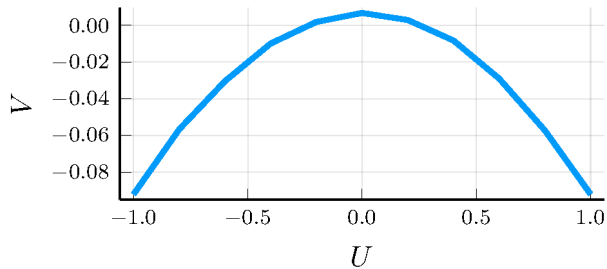

	\centering
	\begin{flushright}
	\inputnamedtex{duffing_input_trans_squared}
	\end{flushright}
		\caption{Learned input transformation of a Duffing oscillator with 
		nonlinear control term \(-0.5 u^2\).}
		\label{fig:duffing_usquared_input}
\end{figure}

\begin{figure}[!ht]
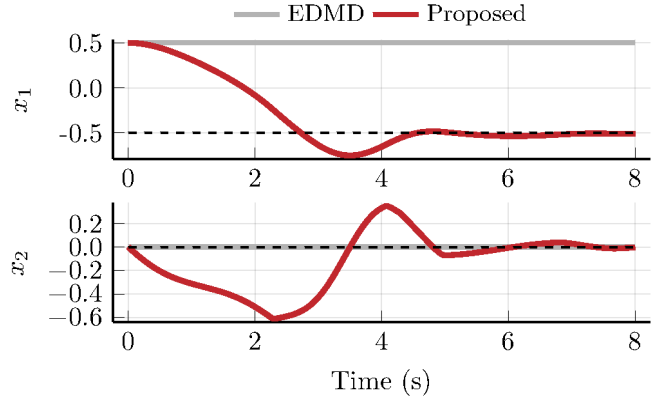

	\centering
	\begin{flushright}
	\inputnamedtex{duffing_usquared}
	\end{flushright}
		\caption{Control of a Duffing oscillator with nonlinear control 
		term. EDMD was not able to leave the equilibrium because the method 
		does not consider nonlinear input transformations.}
		\label{fig:duffing_usquared}
\end{figure}
\begin{figure}[!ht]
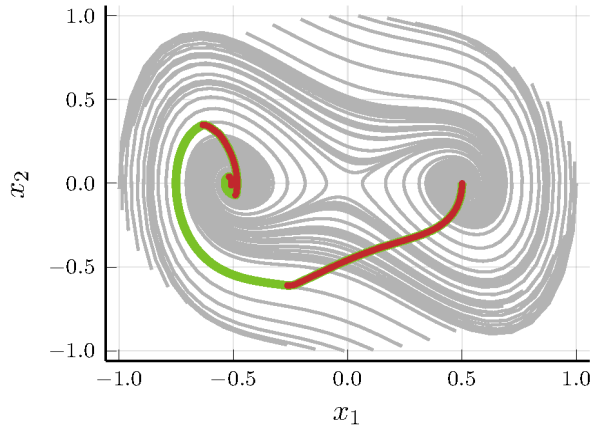

	\centering
	\begin{flushright}
	\inputnamedtex{duffing_usquared_2d}
	\end{flushright}
		\caption{Nonlinear Duffing control trajectory (red/green) plotted over 
		autonomous trajectories (grey) of the system. Control was applied 
		only in the red parts of the trajectory.}
		\label{fig:duffing_usquared_2d}
\end{figure}

\subsection{Singletrack vehicle model}


This example will show the algorithm on a singletrack vehicle model
used in \cite{Cibulka2021}. 
We shall present open-loop prediction capabilities, 
show that the lifted variables may have real-world physical meaning, and demonstrate the advantage of using the KMPC over a local-linearization-based MPC (we shall call it Linear MPC from now on).
\\
\subsubsection{Model description}
Due to the \newcor{complex} nature of the model, we shall 
present it only as a black-box model and refer the reader to 
\cite[Section 2]{Cibulka2021} for full model derivation. The vehicle is modeled as a planar singletrack model, also referred to as bicycle model. The main source of nonlinearities
are the tires, which are modeled using the 
high-fidelity "Pacejka tire model (2012)". This tire model contains over a hundred of parameters, which are measured on the physical tire itself. The parameter set used in this work is from the \textit{Automotive Challenge 2018} organized by Rimac Automobili.
We refer the reader to \cite[Chapter 4]{Pacejka2012} for details on the full tire model.

For our purposes, the vehicle is a nonlinear function 
\begin{equation}
	\label{eq:vehicle_nlfun}
	\begin{bmatrix}
		\dot{v}_x \\ \dot{v}_y \\ \dot{r}
	\end{bmatrix}
	= f_{\rm v}\left(\begin{bmatrix}
		v_x \\ v_y \\ r
	\end{bmatrix},\begin{bmatrix}
		\delta \\ \lambda
	\end{bmatrix}\right),
\end{equation}
where the states \(v_x ({\rm m/s}),v_y({\rm m/s})\), and \(r({\rm rad/s})\) are 
the vehicle longitudinal, lateral, and angular velocities in this order.
The inputs \(\delta(^{\circ})\) and \(\lambda(-)\) are the front steering angle and the rear-tire slip ratio respectively.
The bounds on the states and inputs are 
\(v_x \in \left[\newcor{0},30\right] {\rm m/s} \), \(v_y \in \left[-30,30\right] {\rm m/s} \), \(r \in \left[-15,15\right] {\rm rad/s}\), \(\delta \in \left[-30,30\right]^{\circ}\), and \(\lambda \in \left[-1,1\right]\). 

The model was discretized using Runge-Kutta 4 with sample time \(T_{\rm s} = 0.02s\).
\\
\subsubsection{Predictor learning}
The number of learning trajectories was \(N = 8384\) with
length \(H_{\rm T} = 10\) (\(0.2\)s).
A state was considered feasible if its kinetic energy 
was less that 300kJ, forward velocity \(v_x\) was positive, and the front-wheel slip angles were less than \(15^\circ\). 
A trajectory was considered feasible if \(70\%\) of its 
states were feasible.

The dimension of the lifted state-space was \(n_z = 45\).
The vehicle has the following symmetry
\begin{equation}
	\label{eq:vehicle_nlfun_sym}
	\begin{bmatrix}
		\dot{v}_x \\ -\dot{v}_y \\ -\dot{r}
	\end{bmatrix}
	= f_{\rm v}\left(\begin{bmatrix}
		v_x \\ -v_y \\ -r
	\end{bmatrix},\begin{bmatrix}
		-\delta \\ \lambda
	\end{bmatrix}\right),
\end{equation}
which was exploited by the proposed predictor according to the 
Section \ref{sec:symmetry}.






\subsubsection{Analysis of the Koopman predictor}
Before presenting results of closed-loop control, we would like to show properties of the approximated Koopman operator.

The open-loop predictions remain accurate far longer than
the \(H_{\rm T} = 10\) used for learning.
The Figures \ref{fig:car_openloop_learned_state} and \COR{\ref{fig:car_openloop}}
show trajectories of length \(H_{\rm ol} = 100\); the former uses a state from \(Z_0\) as an initial condition, whereas the latter uses a state with \newcor{non-negligible} lifting error. Both examples show good prediction and asymptotic properties, considering the fact that they are 10 times longer than the learning trajectories.

Another notable result are the lifting functions in the Figures \ref{fig:veh_lift_1}
and \ref{fig:veh_lift_2}: they appear to have a physical meaning. 
The lifting function
of the steering angle (Fig.\ref{fig:veh_lift_1}) has a shape of a lateral force characteristics under constant acceleration,
 and the lifting function of the rear slip ratio (Fig.\ref{fig:veh_lift_2}) has the typical shape of a longitudinal force characteristics.
This means that both lifted inputs can be thought of as scaled and shifted forces, \newcor{which are a major nonlinearity present in the vehicle model.}
\begin{figure}[!ht]
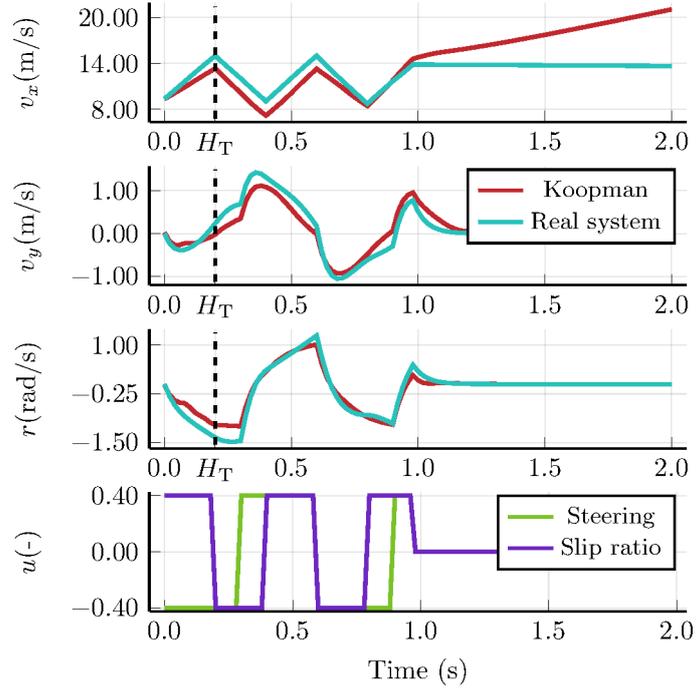

	\centering
	\begin{flushright}
	\inputnamedtex{car_openloop_learned_state}
	\end{flushright}
		\caption{Open-loop prediction with square-wave forcing. The inputs are normalized. \newcor{The black dashed line shows the learning horizon \(H_{\rm T}\).
		The trajectory is 10 times longer than the learning trajectories \(\mathcal{T}_i\).}
		The initial state is taken from the set \(Z_0\) used for learning.
		\COR{The second half of the trajectory shows the asymptotic properties of the predictor;
		 \newcor{the longitudinal velocity slowly increases, we 
		 attribute this to the fact that the predictor is always actuated in the lifted state space (see Fig.\ref{fig:veh_lift_2}).}
		 }}
		\label{fig:car_openloop_learned_state}
\end{figure}
\begin{figure}[!htb]
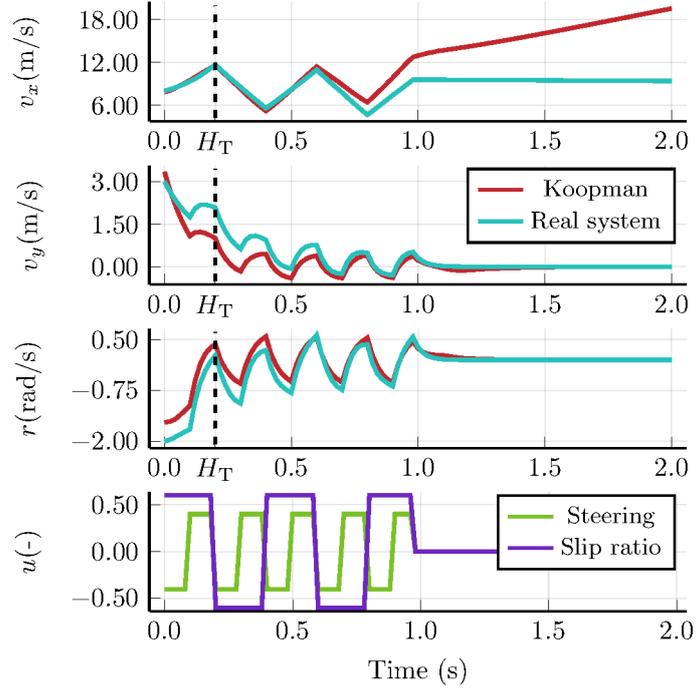

			\begin{flushright}
			\inputnamedtex{car_openloop}
			\end{flushright}
	\caption{Open-loop prediction with square-wave forcing. The inputs are normalized.
	\newcor{The black dashed line shows the learning horizon \(H_{\rm T}\).}
		The trajectory is 10 times longer than the learning trajectories \(\mathcal{T}_i\).
		The chosen initial state has noticeable lifting error (the trajectories starts with an offset); in spite of that, the trend of the predicted trajectory is similar to the real one. 
		\COR{The asymptotic properties are the 
		same as in Fig.\ref{fig:car_openloop_learned_state}.}
		}
		\label{fig:car_openloop}
\end{figure}
\begin{figure}[!ht]
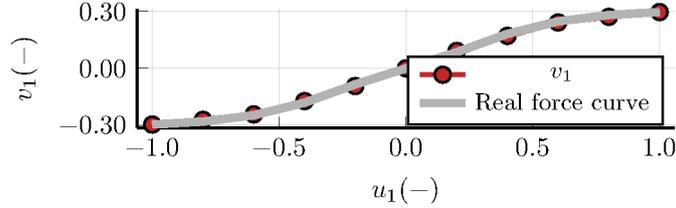

	\centering
	\begin{flushright}
		\inputnamedtex{vehicle_liftfun_1}
	\end{flushright}
\caption{Lifting function of the (normalized)
steering angle, compared to scaled lateral force of the front tire at 
\(v_x = 10 {\rm m/s}\) with slip ratio \(\lambda = 0.2\).}
\label{fig:veh_lift_1}
\end{figure}

\begin{figure}[!ht]
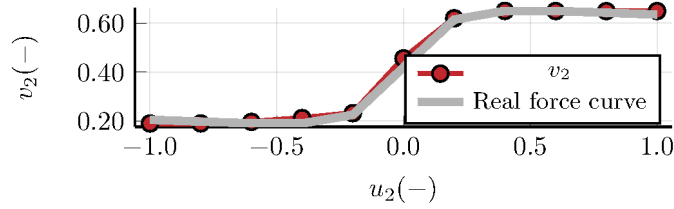

	\centering
	\begin{flushright}
		\inputnamedtex{vehicle_liftfun_2}
	\end{flushright}
\caption{Lifting function of the rear slip ratio compared to a 
scaled and shifted real force curve of the rear tire for the vehicle driving forward at \(v_x = 10 {\rm m/s}\). \newcor{Note that the zero of the lifted input is shifted and the 
lifted input is always positive. We attribute this to the fact 
that the learning data contained mostly forward-driving states}.
}
\label{fig:veh_lift_2}
\end{figure}

\subsubsection{Control}
In this section, we compare the control performance of the proposed KMPC
against EDMD, Linear MPC, and Nonlinear MPC.
The operating point of the Linear MPC is \(x_{\rm op} = \begin{bmatrix}
	16.7 &0 & 0
\end{bmatrix}^\top, u_{\rm op} = \begin{bmatrix}
	0 & 0 
\end{bmatrix}^\top\) unless said otherwise.
\corr{The Nonlinear MPC implements the problem \eqref{eq:mpc_nl}, with all the bounds and 
the cost function identical to KMPC, except for the input weight \(R\), as explained below in \eqref{eq:R_uv}.}

The EDMD had the same lifted state space dimension \(n_z=45\) and the
lifting was done using thin plate spline radial basis functions \cite{Korda_koop}. 
The learning dataset was the same as for the proposed method,
only scaled to the unit box, as was the case in \cite{Korda_koop}
where the EDMD-based KMPC was first introduced.


In all examples, the MPC parameters were \(Q = \rm{Diag}(1,1,1)\), \(R_v = \rm{Diag}(300,30)\), and \(H = 30\) (0.6s). 
The bounds on outputs and inputs were set according to the model description  \eqref{eq:vehicle_nlfun}. 

The input rate constraints were not used in order to make
 the controllers comparable.
All MPCs had the same parametrization, except for the input-weighting matrix \(R_{u}\), which was calculated by scaling \(R_v\) as 
\begin{equation}
	\label{eq:R_uv}
	R_u = R_v \odot s s^{\top},
\end{equation}
where \(\odot\) denotes element-wise multiplication and \(s\) is a scaling vector, \(s_k = (\max(V_k) - \min(V_k)) /2\).
\corr{The scaling was necessary to make the controllers 
comparable, since the proposed KMPC optimizes the lifted input whereas EDMD, LMPC, and NMPC
optimize the original input.}

In our previous work \cite{Cibulka2021}, we have already 
presented a Koopman-based MPC for vehicle dynamics but 
we were not able to outperform the linear controller at all times, \corplane{ only in the highly nonlinear regimes.}
For this reason, we start with a simple maneuver around an operating point 
of the Linear MPC, in order to demonstrate that our KMPC is capable of 
driving the vehicle there.

\begin{figure}[htb]
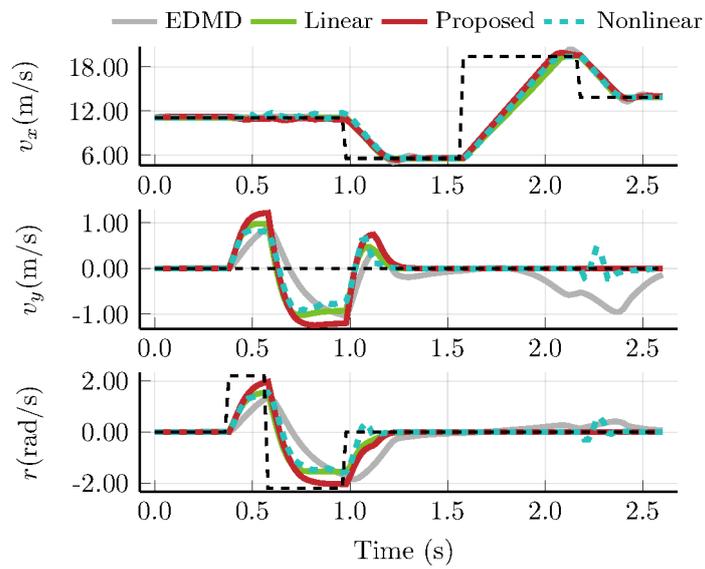

	\centering
	\begin{flushright}
		\inputnamedtex{normal}
		\end{flushright}
		\caption{
			Simple maneuvers around the operating point of the linear controller. The turn radius was 5 meters. Black dotted lines 
			are the references. We see that all controllers 
			performed similarly, although EDMD shows small oscillations 
			by the very end of the manuever.
		}
		\label{fig:normal_maneuver}
\end{figure}

The first example in Fig. \ref{fig:normal_maneuver} shows 
a simple maneuver where the car is steered to the left, right, slows down,
and finally speeds up. 
The references are on the yawrate \(r\) and velocity \(v_x\). 
We see that all the controllers track the reference without issues.
One can notice the different offsets from the steady-state turns in the beginning of the maneuver. This is likely to be caused by the 
different weighting of the input mentioned in \eqref{eq:R_uv}.

The second example in Fig. \ref{fig:ifac_skid} is stabilization of the vehicle from a sideways skid. 


The car starts in sideways skid \(x_0 = \begin{bmatrix}
	0 & -15 & 0
\end{bmatrix}^\top\) and the goal is to \newcor{perform a recovery maneuver and get the vehicle into a} forward-driving state \(x_{\rm ref} = \begin{bmatrix}
	10 & 0 & 0
\end{bmatrix}^\top\). 
The results can be seen in Fig.\ref{fig:ifac_skid}.
The Nonlinear MPC had the shortest settling time,
followed by the proposed method.
 The linear controller and the EDMD managed to stabilize the system 
but with much higher cost. Notice also that its yawrate is opposite to that of NMPC, and has much higher peak.
Their longitudinal velocities were close to zero until \(0.3s\) mark, 
which delayed the stabilization.

 Figure \ref{fig:ifac_skidsym} shows  
 maneuver symmetrical to the one in Fig.\ref{fig:ifac_skid}, i.e. the starting state is 
\(x_0 = \begin{bmatrix}
	0 & 15 & 0
\end{bmatrix}^\top\). 
We see that the EDMD results in a very different trajectory,
with \(v_x\) very similar to the proposed method and the 
Nonlinear MPC, although the yawrate still stabilizes 
late, around the \(0.75s\) mark.
All the other controllers resulted in trajectories symmetrical
to those in Fig.\ref{fig:ifac_skid}.

It is quite surprising and rather undersired that the 
EDMD performed so differently in two completely symmetrical maneuvers.
The proposed method remained consistent in both cases, due to the symmetry exploitation introduced in Section \ref{sec:symmetry}. 

\paragraph*{Timings}
 Table \ref{tab:mpc_times} compares the timings of the 
used controllers. We see that the Koopman, EDMD and the Linear 
MPCs have comparable computation times. 
We see that the proposed method seems to have consistent 
computation time irrespective of the maneuver, unlike the 
other two linear controllers. Nevertheless, the Linear MPC 
was the fastest, as expected.
The Nonlinear controller is not competitive in terms of timing.
\begin{table}[ht]
	\centering
	\begin{tabular}{lcccc}
	\toprule
	 & Proposed & EDMD & Linear & Nonlinear \\
	\midrule
	Fig.\ref{fig:ifac_skidsym} & 1.98ms & 1.64ms & 1.08ms & 52.8ms \\
	Fig.\ref{fig:ifac_skid}    & 1.9ms & 1.78ms & 1.04ms &  54.3ms \\
	Fig.\ref{fig:normal_maneuver} & 2.15ms & 4.67ms & 0.5ms & 69.3ms \\
	\bottomrule
	\end{tabular}
	\caption{
		Comparison of average computation times 
		for one MPC iteration. Both Koopman and Linear 
		were solved using OSQP \cite{osqp}
		using the dense MPC form found in Appendix \ref{sec:condensed}. 
		The Nonlinear MPC 
    used the multiple-shooting formulation \cite{Bock1984_multipleshootingOG,Diehl2001_multipleshooting} designed in CasADi \cite{Andersson2019} and solved via Ipopt \cite{ipopt}. All simulations were done on
		Intel Core i7-9750H CPU with 6 $\times$ 2.6GHz.
		The number of cores used by the solvers had negligible 
		effect in all cases.}
		\label{tab:mpc_times}
	\end{table}
\begin{figure}[!htb]
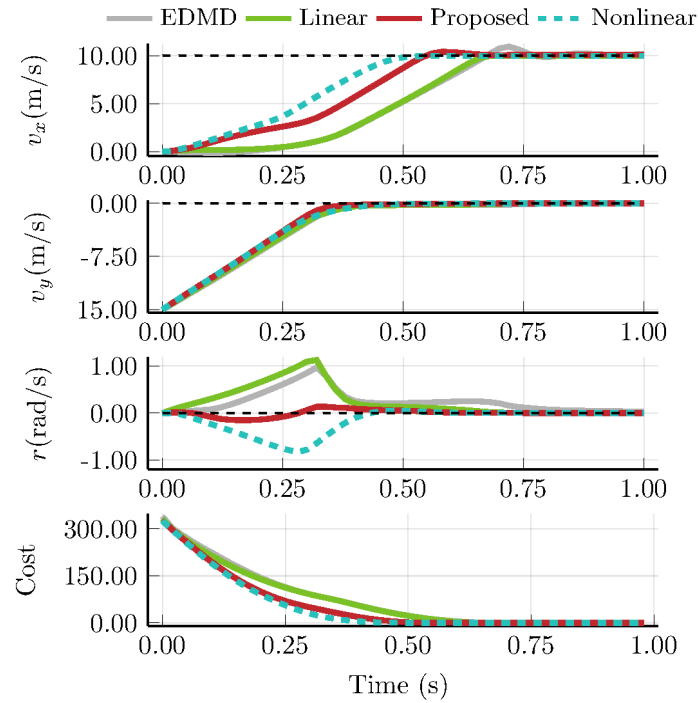

	\centering
	\begin{flushright}
	\inputnamedtex{ifac_skid}
	\end{flushright} 
		\caption{
		Stabilization of a skidding vehicle. Black dotted line is the reference. \newcor{Last graph shows the cost function the MPC problem \eqref{eq:MPC2QP}}.
		We see that the proposed KMPC and NMPC stabilized the system 
		faster than the other controllers.
		The Linear MPC performed a maneuver opposite to those 
		of the proposed method and NMPC, had much higher 
		yawrate overshoot, and resulted in significantly higher cost. 
		The EDMD did not manage to stabilize the vehicle in the given time.
		}
		\label{fig:ifac_skid}
\end{figure}
\begin{figure}[!htb]
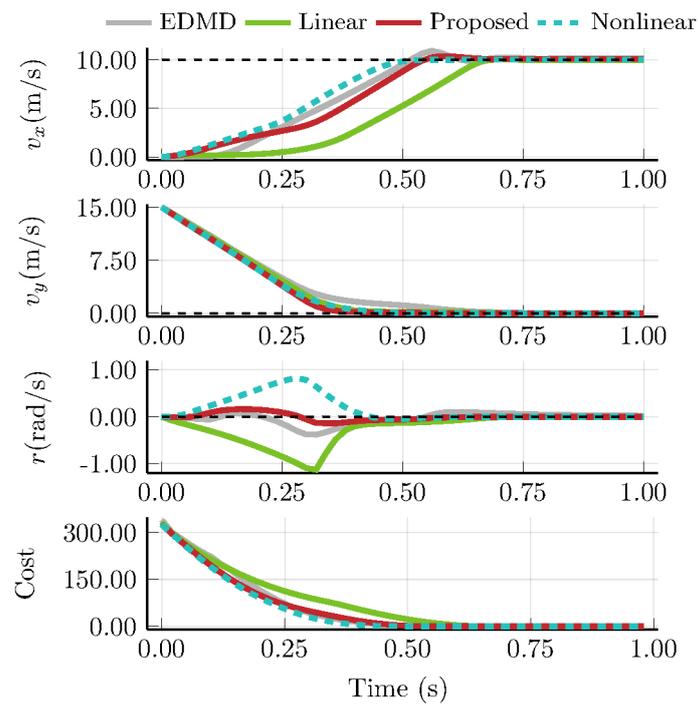

	\centering
	\begin{flushright}
	\inputnamedtex{ifac_skidsym}
	\end{flushright} 
		\caption{
		Symetrical maneuver to Fig \ref{fig:ifac_skid}.
		The EDMD predictor results in a different trajectory
		than in the previous case from Fig.\ref{fig:ifac_skid}, even though the problem setup 
		is symmetrical.}
		\label{fig:ifac_skidsym}
\end{figure}

\clearpage 
\section{Conclusion}
\label{sec:conclusion}
We presented a novel method for learning the Koopman predictor of a controlled nonlinear dynamical system.
The novelty of this approach consists in searching for 
both the Koopman predictor and the lifting functions at the same time, as well as in a nonlinear transformation of the 
control input.

Our examples show that in the classic, autonomous setting, the approach is capable of addressing 
phenomena such as multiple equilibria and discontinuous lifting 
functions. 
We demonstrate that the input transformation allows to control 
systems which are impossible to control by methods retaining the original input.

In one case, we were able to give physical meaning to the 
 learned input lifting function, without giving 
 any prior information to the 
  optimization routine.

We compared the method to two other Koopman predictors, namely EDMD
 and Optimal construction of eigenfunctions,
 to local linearization,
 and to a nonlinear controller based on CasADi \& Ipopt.  
 We show that our method outperforms the aforementioned Koopman 
 approaches, that it can approximate larger class of systems, 
 and that it is competitive to nonlinear control.

\section*{Acknowledgments}
The authors would like to thank 
Petar Bevanda for his suggestions 
regarding the presented examples.

\bibliographystyle{plain}
\bibliography{library_freeman_pub.bib} 

\newpage
\appendix
  \section{Condensed MPC formulation}
  \label{sec:condensed}
  In this section, we show that the problem \eqref{eq:MPC2QP} can be formulated 
  in a way, that makes its size independent of \(n_{z}\).
  This is very useful for the Koopman operator, since 
  \(n_z\) will be usually large in most applications.
  We restate the problem \eqref{eq:MPC2QP} for convenience
  \begin{equation}
    \label{eq:MPC2QPap}
    \begin{aligned}
    \min_{\Delta v_t, z_t} \quad & \sum_{t=1}^{H}
    ||y_{\rm ref} - y_t||^2_{Q} + ||v_t||^2_{R} + ||\Delta v_t||^2_{R_{\rm d}} \\ 
    \textrm{s.t.} \quad & z_{t+1} = A z_t + B v_t\\
    & y_t = C z_t\\
    & v_t = \Delta v_{k} + v_{k-1}\\
    & y_{\rm low} \leq y_t \leq y_{\rm up} \\
    & v_{\rm low} \leq v_t \leq v_{\rm up} \\
    & \Delta v_{\rm low} \leq \Delta v_t \leq \Delta v_{\rm up}\\
    & z_0 = \Phi(x_0) \\ 
    & v_0 = \Psi( u_{\rm prev}),
    \end{aligned}
  \end{equation}
  where \(y_t = \begin{bmatrix}
    \hat{\chi}_t \\ \hat{x}_t
  \end{bmatrix}\), \(Q \succcurlyeq 0\), \(R \succcurlyeq 0\), and \(R_{\rm d}\succ 0\).

  The output trajectory of \eqref{eq:MPC2QPap} can be written as
  
  \begin{equation}
    \begin{bmatrix}
      y_1 \\y_2 \\ \vdots \\ y_H
    \end{bmatrix}
    =
    \begin{bmatrix}
      CB        &       &        & \\
      CAB       & CB    &        & \\
      \vdots    &       & \ddots & \\
       CA^{H-1}B & \dots &        & CB \\
    \end{bmatrix}
    \begin{bmatrix}
      v_0 \\ \vdots \\ v_{H-1}
    \end{bmatrix}
    +
    \begin{bmatrix}
        CA \\
    CA^2       \\ 
    \vdots      \\ 
    CA^H       
    \end{bmatrix}
    z_0
  \end{equation}
  or in short as
  \begin{equation}
    Y = MV + C_{z}.
  \end{equation}
  We see that the size of the matrix \(M\) is of size \(Hn_y \times Hn_v\) 
  and does not depend on \(n_z\).
  
  The input rates can be obtained as 
  \begin{equation}
    \begin{bmatrix}
      \Delta v_0 \\ \Delta v_1 \\ \vdots \\ \Delta v_{H-1}
    \end{bmatrix} = 
    \begin{bmatrix}
      I  &       &        & \\
      -I &I      &        & \\
         & \ddots& \ddots & \\
       \dots  &   0    & -I     &I
    \end{bmatrix}
    \begin{bmatrix}
       v_0 \\ v_1 \\ \vdots \\ v_{H-1}
    \end{bmatrix}
    +
    \begin{bmatrix}
      -I    \\
      0     \\
       \vdots \\
      0     
    \end{bmatrix}
    v_{\rm prev} 
  \end{equation}
  or in short as
  \begin{equation}
    \Delta V = DV + C_{v}
  \end{equation}
  
  The first term in the cost function can be rewritten as 
  \begin{equation}
    \label{eq:cost1}
  \begin{alignedat}{2}
    \sum_{t=1}^{H}||y_{\rm ref} - y_t||^2_{Q} 
    =&(Y - Y_{\rm ref})^{\top} Q_H (Y-Y_{\rm ref})   \\
    =& Y^{\top} Q_H Y - 2Y_{\rm ref}^{\top}QY + Y_{\rm ref}^{\top}Q Y_{\rm ref}  \\
    =&V^{\top} (M^{\top} Q_H M ) V - 2(C_z - Y_{\rm ref})^{\top}Q_H MV + C_0
  \end{alignedat}
  \end{equation}
  where \(C_0\) contains terms that do not depend on \(V\) and \(Y_{\rm ref}\) is a column vector of \(y_{\rm ref}\) repeated 
  \(H\)-times.
  The second term is 
  \begin{equation}
    \label{eq:cost2}
    \begin{alignedat}{2}
      \sum_{t=1}^{H}||\Delta v_t||^2_{R_{\rm d}}  &= 
      \Delta V^{\top} R_{{\rm d},H} \Delta V \\ 
      &=V^{\top} ( D^{\top} R_{{\rm d},H} D ) V + 2 C_{v}{^\top}R_{{\rm d},H}DV + C_1,
    \end{alignedat}
    \end{equation}
  where \(C_1\) is a term that does not depend on \(V\).
  
  Finally
  \begin{equation}
  \label{eq:cost3}
  \begin{alignedat}{2}
    \sum_{t=1}^{H}||v_t||^2_{R}  = V^{\top} R_{H} V.
  \end{alignedat}
  \end{equation}
  The total cost can be written as 
  \begin{equation}
    V^{\top} F V + q^{\top}V,
  \end{equation}
  where 
  \begin{equation}
    \label{eq:F}
    F = M^{\top} Q_H M +  D^{\top} R_{{\rm d},H} D + R_{{\rm d},H}
  \end{equation}
  and 
  \begin{equation}
    \label{eq:q}
    q = (- 2(C_z - Y_{\rm ref})^{\top}Q_H M + 2 C_{v}^\top R_{{\rm d},H} D)^{\top}.
  \end{equation}
  The constraints can be written as 
  \begin{equation}
    \label{eq:constraints}
    \begin{bmatrix}
      M \\ -M \\ I_{H\cdotp n_v} \\ -I_{H\cdotp n_v} \\ D \\ -D
    \end{bmatrix}V
    +
    \begin{bmatrix}
      C_z \\ -C_z \\ 0 \\ 0\\ C_v \\ -C_v
    \end{bmatrix}
    \leq 
    \begin{bmatrix}
      y_{\rm up} \\ -y_{\rm low} \\ v_{\rm up} \\ -v_{\rm low} \\
      \Delta v_{\rm up} \\ -\Delta v_{\rm low}
    \end{bmatrix}
  \end{equation}
  or in short as \(G V + G_0 \leq h\).

  With the formulas \eqref{eq:F},\eqref{eq:q}, and \eqref{eq:constraints}
  we can write \eqref{eq:MPC2QPap} as the following QP
  \begin{equation}
    \label{eq:MPCasQP}
    \begin{aligned}
    \min_{ V} \quad & V^{\top} F V + q^{\top}V \\
    \textrm{s.t.} \quad & G V + G_0 \leq h.
    \end{aligned}
  \end{equation}

\end{document}